\newcommand{\newsection}[1]{\setcounter{equation}{0} \section{#1}}
\newcommand{\bea}{\begin{eqnarray}}
\newcommand{\eea}{\end{eqnarray}}
\newcommand{\vp}{\varphi}
\newcommand{\clh}{\mathcal{H}}
\newcommand{\clq}{\mathcal{Q}}
\newcommand{\cls}{\mathcal{S}}
\newcommand{\clz}{\mathcal{Z}}
\newcommand{\D}{\mathbb{D}}
\newcommand{\bS}{\mathbb{S}}
\newcommand{\raro}{\rightarrow}
\def\textmatrix#1&#2\\#3&#4\\{\bigl({#1 \atop #3}\ {#2 \atop #4}\bigr)}
\def\dispmatrix#1&#2\\#3&#4\\{\left({#1 \atop #3}\ {#2 \atop #4}\right)}
\newcommand{\be}{\begin{equation}}
\newcommand{\ee}{\end{equation}}
\newcommand{\ben}{\begin{eqnarray*}}
\newcommand{\een}{\end{eqnarray*}}
\newcommand{\NI}{\noindent}
\newcommand{\bi}{\begin{itemize}}
\newcommand{\ei}{\end{itemize}}
\newcommand{\T}{\mathbb{T}}
\newtheorem{Question}{\sc Problem}
\theoremstyle{definition}
\theoremstyle{plain}
\newtheorem{thm}{Theorem}[section]
\newtheorem{cor}[thm]{Corollary}
\newtheorem{lem}[thm]{Lemma}
\newtheorem{prop}[thm]{Proposition}
\theoremstyle{definition}
\newtheorem{defn}[thm]{Definition}
\newtheorem{ex}[thm]{Example}
\numberwithin{equation}{section}
\let\phi=\varphi
\begin{document}

\title[Pairs of inner projections]{Pairs of inner projections and two applications}

\author[Debnath]{Ramlal Debnath}
\address{Department of Mathematics, Indian Institute of Technology Bombay, Powai, Mumbai,
400076, India}
\email{ramlaldebnath100@gmail.com, ramlal@math.iitb.ac.in}

\author[Pradhan] {Deepak K. Pradhan}
\address{Department of Mathematics, Indian Institute of Technology Hyderabad, Kandi, Sangareddy, Telangana, 502284,
India}
\email{deepak12pradhan@gmail.com, dkpradhan@math.iith.ac.in}

\author[Sarkar]{Jaydeb Sarkar}
\address{Indian Statistical Institute, Statistics and Mathematics Unit, 8th Mile, Mysore Road, Bangalore, 560059,
India}
\email{jay@isibang.ac.in, jaydeb@gmail.com}


\subjclass[2010]{47A46, 32A10, 42B30, 30J05, 47L80, 46J15, 47A15, 30H05}

\keywords{Orthogonal projections, inner functions, partial isometry, truncated Toeplitz operators, Beurling subspaces, model spaces, Hardy space, polydisc}

\begin{abstract}
Orthogonal projections onto closed subspaces of $H^2(\mathbb{D}^n)$ of the form $\varphi H^2(\mathbb{D}^n)$ for inner functions $\varphi$ on $\mathbb{D}^n$ are referred to as inner projections, where $H^2(\mathbb{D}^n)$ denotes the Hardy space over the open unit polydisc $\mathbb{D}^n$. In this paper, we classify pairs of commuting inner projections. We also present two seemingly independent applications: the first is an answer to a question posed by R. G. Douglas, and the second is a complete classification of partially isometric truncated Toeplitz operators with inner symbols on $\mathbb{D}^n$.
\end{abstract}

\maketitle

\tableofcontents

\newsection{Introduction}\label{sec: 1}

This paper concerns commuting pairs of orthogonal projections blended with analytic flavors and two sides of applications of seemingly disparate problems. The first application yields an answer (in a much broader context) to a question raised by R. G. Douglas (see Bickel and Liaw \cite[page 104]{BL}), whereas the second classifies partially isometric truncated Toeplitz operators with inner symbols on the polydisc. Recall that a bounded linear operator $P$ acting on a Hilbert space $\clh$ is called an \textit{orthogonal projection} (\textit{projection} in short) if
\[
P = P^* = P^2.
\]
On one hand, the structure of projections is simple and quite standard, and on the other hand, the linear analysis relies heavily on the projections of Hilbert spaces. The structure of pairs of commuting projections is also simple and well-known. For instance, for a pair of projections $P_1$ and $P_2$ acting on a Hilbert space, the operator $P_1 P_2$ is a projection if and only if
\[
P_1 P_2 = P_2 P_1.
\]
Moreover, in this case, the orthogonal projection $P_1 P_2$ is given by
\[
P_1 P_2 = P_{\text{ran} P_1 \cap \text{ran} P_2}.
\]
Throughout the paper, given a Hilbert space $\clh$ and a closed subspace $\cls \subseteq \clh$, we denote by $P_{\cls}$  the associated (orthogonal) projection onto $\cls$.

We are also interested in pairs of commuting projections. Our point of view, however, is more analytic in nature and relates to some of the classical concepts in Hilbert function spaces. Let us now explain the analytic component of our consideration. Given a natural number $n$, we denote by $\D^n$ the open unit polydisc in $\mathbb{C}^n$. We let $H^\infty(\D^n)$ denote the Banach algebra of all bounded analytic functions defined on $\D^n$, that is
\[
H^\infty(\D^n) = \{ \vp \in \text{Hol}(\D^n): \|\vp\|_\infty = \sup_{z \in \D^n} |\vp(z)| < \infty\}.
\]
A function $\vp \in H^\infty(\D^n)$ is said to be \textit{inner} if
\[
|\vp(z)| = 1,
\]
for all $z \in \T^n$ a.e, where $\T^n = \partial \D^n$ is the distinguished boundary of $\D^n$. The boundary value of $\vp$ is in the sense of Fatou \cite{Rud}: a bounded analytic function defined on $\D^n$ has radial limits on $\T^n$ a.e. We also recall that the Hardy space $H^2(\D^n)$ is the Hilbert space of all analytic functions $f$ on $\D^n$ such that
\[
\|f\| := \Big(\sup_{0 \leq r < 1} \int_{\mathbb{T}^n} |f(rz_1, \ldots, r z_n)|^2 d{m}({z}) \Big)^{\frac{1}{2}} < \infty,
\]
where $z = (z_1, \ldots, z_n)$ and $dm(z)$ is the normalized Lebesgue measure on $\mathbb{T}^n$. It is well-known that a function $\vp \in H^\infty(\D^n)$ is inner if and only if the analytic Toeplitz operator $M_\vp$ defines an isometry on $H^2(\D^n)$. Recall, in general, for each $\vp \in H^\infty(\D^n)$, $M_\vp$ is a bounded linear operator on $H^2(\D^n)$ with $\|M_\vp\| = \|\vp\|_\infty$, where
\[
M_\vp f = \vp f,
\]
for all $f \in H^2(\D^n)$. Therefore, $\vp H^2(\D^n)$ is a closed subspace of $H^2(\D^n)$ for each inner function $\vp \in H^\infty(\D^n)$. When $\phi(z) = z_i$, $i=1, \ldots, n$, we get the simplest example of an inner function. In particular, $(M_{z_1}, \ldots, M_{z_n})$ is a commuting tuple of isometries on $H^2(\D^n)$. Therefore, given an inner function $\vp \in H^\infty(\D^n)$, the closed subspace $\vp H^2(\D^n)$ is a joint invariant subspace of $(M_{z_1}, \ldots, M_{z_n})$. A closed subspace $\cls$ of $H^2(\D^n)$ is said to be joint invariant if $z_i \cls \subseteq \cls$ for all $i=1, \ldots, n$.  

\begin{defn}
A projection $P$ on $H^2(\D^n)$ is said to be an inner projection if there exists an inner function $\vp \in H^\infty(\D^n)$ such that
\[
\text{ran} P = \vp H^2(\D^n),
\]
or equivalently
\[
P = P_{\vp H^2(\D^n)}.
\]
\end{defn}

The problem that follows is the one that the present paper is mainly concerned with.

\begin{Question}\label{prob 1}
Classify pairs of commuting inner projections.
\end{Question}

In other words, we want to classify inner functions $\vp_1, \vp_2 \in H^\infty(\D^n)$ such that
\[
[P_{\vp_1 H^2(\D^n)}, P_{\vp_2 H^2(\D^n)}] = 0.
\]
Recall that the commutator of a pair of bounded linear operators $A$ and $B$ acting on a Hilbert space is denoted by $[A,B]$, that is, $[A,B] = AB - BA$. Of course, we are looking for a solution to the above problem that is connected with the matching inner functions; that is, we are looking for an analytic answer. To simplify the terminology, we introduce a function-theoretic notion.

\begin{defn}
Let $\Omega \subseteq \mathbb{C}^n$, $n > 1$. Two functions $f, g: \Omega \raro \mathbb{C}$ are said to be separated if $f$ and $g$ do not depend on common variables.
\end{defn}

Given two inner functions $\vp_1, \vp_2 \in H^\infty(\D^n)$, we say that $\vp_1$ divides $\vp_2$ if $\vp_2 = \vp \vp_1$ for some $\vp \in H^\infty(\D^n)$ (in this case, the function $\vp$ will be inner). We are now ready to formulate the solution to the preceding problem:

\begin{thm}\label{thm intro inner}
Let $\vp_1,\vp_2\in H^{\infty}(\D^n)$ be non-constant inner functions. Then the following hold:
\begin{enumerate}
\item $[P_{\vp_1H^2(\D)}, P_{\vp_2H^2(\D)}]=0$ if and only if either $\vp_1$ divides $\vp_2$ or $\vp_2$ divides $\vp_1$.
\item Suppose $n > 1$. Then $[P_{\vp_1H^2(\D^n)}, P_{\vp_2H^2(\D^n)}]=0$ if and only if there exist inner functions $\tilde{\vp}_1, \tilde{\vp}_2, \psi \in H^{\infty}(\D^n)$ such that $\tilde{\vp}_1$ and $\tilde{\vp}_2$ are separated, and such that
\[
\vp_1 = \tilde{\vp}_1 \psi \text{ and } \vp_2 = \tilde{\vp}_2 \psi.
\]
\end{enumerate}
\end{thm}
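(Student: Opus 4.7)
My plan is to reduce commutativity of the two projections to a partial isometry condition on a Toeplitz operator, and then to extract the factorization from the structure of the initial and final spaces of that partial isometry. For the $(\Leftarrow)$ direction of Part (2), write $\vp_i = \psi \tilde{\vp}_i$ with $\tilde{\vp}_1$ depending only on a variable set $z_I$ and $\tilde{\vp}_2$ only on a disjoint $z_J$. Since $\vp_i H^2(\D^n) \subseteq \psi H^2(\D^n)$, conjugation by the isometry $M_\psi$ reduces the question to commutativity of the projections onto $\tilde{\vp}_i H^2(\D^n)$. Under the tensor decomposition $H^2(\D^n) = H^2(\D^I) \otimes H^2(\D^J) \otimes H^2(\D^K)$ with $K = \{1,\ldots,n\} \setminus (I \cup J)$, these projections become $P_{\tilde{\vp}_1 H^2(\D^I)} \otimes I \otimes I$ and $I \otimes P_{\tilde{\vp}_2 H^2(\D^J)} \otimes I$, which manifestly commute. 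The $(\Leftarrow)$ direction of Part (1) is the special case $\vp_2 H^2 \subseteq \vp_1 H^2$, so $P_1 P_2 = P_2 = P_2 P_1$.

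For the converse, the central translation is the equivalence
\[
[P_{\vp_1 H^2}, P_{\vp_2 H^2}] = 0 \quad \Longleftrightarrow \quad T := M_{\vp_1}^* M_{\vp_2} = T_{\bar{\vp}_1 \vp_2} \text{ is a partial isometry.}
\]
The forward direction is algebra: multiplying $P_1 P_2 = P_2 P_1$ on the left by $M_{\vp_1}^*$ and on the right by $M_{\vp_2}$, and using $M_{\vp_i}^* M_{\vp_i} = I$ together with $P_i M_{\vp_i} = M_{\vp_i}$, yields $T = T T^* T$. Conversely, $T^* T = M_{\vp_2}^* P_1 M_{\vp_2}$ is a projection iff $P_2 P_1 P_2 = M_{\vp_2}(T^* T) M_{\vp_2}^*$ is one, and the classical fact that $P_2 P_1 P_2$ being a projection forces $[P_1, P_2] = 0$ closes the loop.

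Assuming $T$ is a partial isometry, I next argue that $\clm := \vp_1 H^2 \cap \vp_2 H^2 = \text{ran}(P_1 P_2)$ is a Beurling-type subspace, $\clm = \vp_0 H^2$ for some inner $\vp_0$; this step uses Mandrekar's characterization of Beurling subspaces via doubly commuting restrictions, together with the double commutation $[M_{z_i}^*, M_{z_j}] = 0$ ($i \neq j$) on $H^2(\D^n)$ and the assumed commutativity of $P_1, P_2$. Writing $\vp_0 = \vp_1 \zeta_1 = \vp_2 \zeta_2$ with $\zeta_i$ inner, a direct computation then gives $\ker T = \clk_{\zeta_2}$, initial space $(\ker T)^\perp = \zeta_2 H^2$, final space $\zeta_1 H^2$, and $T(\zeta_2 f) = \zeta_1 f$ for all $f \in H^2$. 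The vanishing of $T$ on $\clk_{\zeta_2}$ forces $M_{\zeta_1}\clk_{\zeta_2} \subseteq \clk_{\zeta_2}$, and a symmetric argument applied to $T^*$ yields $M_{\zeta_2}\clk_{\zeta_1} \subseteq \clk_{\zeta_1}$; these two inclusions are together equivalent to the double commutation $[M_{\zeta_1}, M_{\zeta_2}^*] = 0$ on $H^2(\D^n)$.

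The main obstacle is the final rigidity step: double commutation of the inner multipliers $M_{\zeta_1}$ and $M_{\zeta_2}$ must force $\zeta_1$ and $\zeta_2$ to depend on disjoint subsets of the variables. In one variable this collapses to the statement that $[M_{\zeta_1}, M_{\zeta_2}^*] = 0$ for two nonconstant inner functions is impossible, which recovers Part (1) after unpacking the roles of $\zeta_1, \zeta_2$. For $n > 1$, I plan to establish separation by applying $[M_{\zeta_1}^*, M_{\zeta_2}]$ to arbitrary monomials and reading off a family of Fourier coefficient identities for $\bar{\zeta}_1 \zeta_2$ that force the supports of $\zeta_1$ and $\zeta_2$ onto disjoint coordinate directions. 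Once separation is in hand, the relation $\vp_1 \zeta_1 = \vp_2 \zeta_2$, together with the divisibility principle for separated inner functions (if $\eta_1, \eta_2$ are separated and $\eta_1 \mid \eta_2 h$ then $\eta_1 \mid h$), produces $\zeta_2 \mid \vp_1$ and $\zeta_1 \mid \vp_2$, so $\vp_1 = \psi \zeta_2$ and $\vp_2 = \psi \zeta_1$ with $\tilde{\vp}_1 := \zeta_2$ and $\tilde{\vp}_2 := \zeta_1$ the required separated factors and $\psi = \vp_1/\zeta_2 = \vp_2/\zeta_1$ a common inner factor.
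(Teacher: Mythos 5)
Your reduction of commutativity to the partial isometry of $T=M_{\varphi_1}^*M_{\varphi_2}=T_{\bar{\varphi}_1\varphi_2}$, and your proof of the converse implication via the tensor/conjugation argument, are correct and coincide with the paper's strategy. The gap lies in what you do after that reduction. The paper at this point invokes the classification of partially isometric Toeplitz operators (Brown--Douglas for $n=1$, and \cite[Theorem 1.1]{DDJ} for $n>1$, quoted as Theorem \ref{thm KD}), which hands over exactly the separated inner functions $\tilde{\varphi}_1,\tilde{\varphi}_2$ with $T=M_{\tilde{\varphi}_1}^*M_{\tilde{\varphi}_2}$; a short kernel/range computation then yields the factorization. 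You instead try to rederive this structure from scratch, and the two steps that carry all the analytic weight are not actually proved. First, you assert that $\mathcal{M}=\varphi_1H^2(\mathbb{D}^n)\cap\varphi_2H^2(\mathbb{D}^n)$ is of Beurling type by appealing to Mandrekar's theorem, but you never verify its hypothesis, namely that the restricted shifts on $\mathcal{M}$ doubly commute; this does not follow in one line from $[P_1,P_2]=0$ (the doubly commuting identities for $\varphi_1H^2$ and $\varphi_2H^2$ do not chain through $P_{\mathcal{M}}=P_1P_2$, because $P_2M_{z_j}^*m$ need not lie in $M_{z_j}^*(\varphi_1H^2)$), and the Beurling property of $\mathcal{M}$ is in fact a \emph{consequence} of the theorem (the paper notes $\mathcal{M}=\psi\tilde{\varphi}_1\tilde{\varphi}_2H^2(\mathbb{D}^n)$ after Theorem \ref{thm n var proj}), so assuming it risks circularity; moreover Mandrekar's result is stated for the bidisc, and for $n>2$ you would need its polydisc generalization.

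Second, and more seriously, the step you yourself call the main obstacle --- that $[M_{\zeta_1},M_{\zeta_2}^*]=0$ for inner $\zeta_1,\zeta_2$ forces them to depend on disjoint sets of variables (and, for $n=1$, forces one of them to be constant) --- is only announced as a plan (``apply the commutator to monomials and read off Fourier coefficient identities''), not carried out. This rigidity statement is precisely the hard content of the partially isometric Toeplitz classification that the paper imports; without a proof of it your argument establishes nothing beyond the reduction. The later bookkeeping (kernel $=\mathcal{Q}_{\zeta_2}$, the divisibility principle for separated inner functions, $\psi_1=\psi_2$) is fine, but it all sits on these two unproven pillars. The quickest repair is to do what the paper does: prove (as you did) that $T$ is a partially isometric Toeplitz operator and then cite Theorem \ref{thm KD}; alternatively, you must supply genuine proofs of the doubly commuting property of $\mathcal{M}$ and of the separation rigidity, each of which is a substantial argument in its own right.
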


Note that if one of the inner functions, say $\vp_1$, is a constant function, then $P_{\vp_1H^2(\D^n)} = I_{H^2(\D^n)}$, which immediately implies that $[P_{\vp_1H^2(\D^n)}, P_{\vp_2H^2(\D^n)}]=0$. This observation holds for any $n \geq 1$.

One of the keys to proving the above theorem is the use of the concrete structure of partially isometric Toeplitz operators. More specifically, the proof of the first case uses the classical Brown and Douglas classification of partially isometric Toeplitz operators on $H^2(\D)$ \cite{BH}, whereas the proof of the second case employs the same in several variables, as recently obtained in \cite{DDJ}.

As part of the application, we will apply Theorem \ref{thm intro inner} to address two different kinds of problems that are also purportedly independent of the central question posed in Problem \ref{prob 1}. The problems do have one thing in common: they are related to model spaces (also popularly known as Beurling quotient modules or spaces). Let $\vp \in H^{\infty}(\D^n)$ be a nonconstant inner function. Then the \textit{model space} corresponding to $\vp$ is the closed subspace
\begin{equation}\label{eqn: QM}
\clq_\vp := H^2(\D^n)/ \vp H^2(\D^n) \cong H^2(\D^n) \ominus \vp H^2(\D^n).
\end{equation}
Since $\vp H^2(\D^n)$ is a joint invariant subspace of $(M_{z_1}, \ldots, M_{z_n})$, it readily follows that $\clq_\vp$ is a joint invariant subspace of $(M_{z_1}^*, \ldots, M_{z_n}^*)$.

Our first application answers a question posed by Douglas that has been marked out in the paper by Bickel and Liaw \cite[page 104]{BL} - ``It was suggested to us in private
communications with R. G. Douglas that one should then ask the very general, rather attractive question'':

\begin{Question}[Douglas]\label{Q: 3}
For which inner functions $\vp_1$ and $\vp_2$ in $H^{\infty}(\D^2)$ is the projection $P_{\clq_{\vp_1}} P_{\clq_{\vp_2}}$ finite rank?
\end{Question}

Using Theorem \ref{thm intro inner} (along with some other tools), we completely settle this problem. In fact, we answer this in the most general terms:

\begin{thm}\label{thm intro Douglas}
Let $\vp_1$ and $\vp_2$ be inner functions in $H^{\infty}(\D^n)$, $n \geq 2$. Then the following hold:
\begin{enumerate}
\item If $n=2$, then $P_{\clq_{\vp_1}} P_{\clq_{\vp_2}}$ is a finite rank projection if and only if either one of the following conditions holds:
\begin{enumerate}
\item $\vp_1$ or $\vp_2$ is a constant function;
\item $\vp_1$ and $\vp_2$ are separated and finite Blaschke products.
\end{enumerate}
\item If $n > 2$, then it is impossible for $P_{\clq_{\vp_1}} P_{\clq_{\vp_2}}$ to be a finite rank projection unless $P_{\clq_{\vp_1}} P_{\clq_{\vp_2}} = 0$.
\end{enumerate}
\end{thm}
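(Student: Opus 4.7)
The plan is to combine Theorem \ref{thm intro inner} with a tensor-product decomposition of $\clq_{\vp_1} \cap \clq_{\vp_2}$. For $P_{\clq_{\vp_1}}P_{\clq_{\vp_2}}$ to be a projection at all, the two factors must commute. Since $P_{\clq_{\vp_i}} = I - P_{\vp_i H^2(\D^n)}$, Theorem \ref{thm intro inner}(2) applies and produces inner functions $\tilde\vp_1,\tilde\vp_2,\psi \in H^{\infty}(\D^n)$ with $\tilde\vp_1,\tilde\vp_2$ separated such that $\vp_i = \tilde\vp_i \psi$. Commutation then gives the standard identity $P_{\clq_{\vp_1}}P_{\clq_{\vp_2}} = P_{\clq_{\vp_1} \cap \clq_{\vp_2}}$, so the question reduces to identifying when this intersection is finite dimensional.

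The first step is to eliminate the common factor $\psi$. Because $\vp_i H^2(\D^n) \subseteq \psi H^2(\D^n)$ for both $i$, one has $\clq_\psi \subseteq \clq_{\vp_1} \cap \clq_{\vp_2}$. The key auxiliary fact is that \emph{for $n \geq 2$, any non-constant inner function $\psi \in H^{\infty}(\D^n)$ has $\dim \clq_\psi = \infty$}. I would prove this by contradiction: assuming $\dim \clq_\psi = d < \infty$, the powers $1, z_i, \ldots, z_i^d$ are linearly dependent modulo $\psi H^2(\D^n)$ for each $i$, forcing $\psi$ to be an inner divisor of some non-zero one-variable polynomial $q_i(z_i)$. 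Using the inner-outer factorisation in $H^{\infty}(\D^n)$, together with the rigidity of inner divisors of a one-variable polynomial, one deduces that such a $\psi$ must depend on $z_i$ alone; applying this for every $i$ shows $\psi$ is constant, a contradiction. Consequently a finite-rank product forces $\psi$ to be a unimodular constant, so $\vp_1, \vp_2$ themselves are separated.

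With both $\vp_i$ separated, let $I_i \subseteq \{1,\ldots,n\}$ denote the variables on which $\vp_i$ genuinely depends (with $I_1 \cap I_2 = \emptyset$) and set $I_0 = \{1,\ldots,n\} \setminus (I_1 \cup I_2)$. Under the canonical tensor decomposition of $H^2(\D^n)$ according to the grouping $(I_1, I_2, I_0)$, each submodule $\vp_i H^2(\D^n)$ splits as a tensor of subspaces, and a direct computation yields
\[
\clq_{\vp_1} \cap \clq_{\vp_2} \;=\; \clq^{I_1}_{\vp_1} \otimes \clq^{I_2}_{\vp_2} \otimes H^2(\D^{I_0}),
\]
where $\clq^{I_i}_{\vp_i}$ is the model space of $\vp_i$ inside the Hardy space over the $I_i$-variables. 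The right-hand side is finite dimensional iff each factor is: $H^2(\D^{I_0})$ is finite dimensional iff $I_0 = \emptyset$, while $\clq^{I_i}_{\vp_i}$ is either zero (when $\vp_i$ is a unimodular constant, i.e.\ $I_i = \emptyset$) or, combining the auxiliary lemma for $|I_i| \geq 2$ with the classical one-variable Beurling theory for $|I_i| = 1$, finite dimensional and non-zero iff $|I_i| = 1$ and $\vp_i$ is a finite Blaschke product.

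For $n = 2$ these constraints leave exactly two possibilities: some $\vp_i$ is a unimodular constant (case (1a), giving the zero projection), or $|I_1| = |I_2| = 1$, $I_0 = \emptyset$, with each $\vp_i$ a one-variable finite Blaschke product (case (1b)); the converse in both cases is immediate from the displayed tensor formula. For $n > 2$ one has $|I_1| + |I_2| + |I_0| = n > 2$, so either $|I_0| \geq 1$ or $|I_i| \geq 2$ for some $i$; in either event the intersection is infinite dimensional whenever both $\vp_i$ are non-constant, leaving only the trivial finite-rank case $P_{\clq_{\vp_1}} P_{\clq_{\vp_2}} = 0$. The principal technical obstacle is the auxiliary lemma on multivariable model spaces, whose proof hinges on ruling out exotic inner divisors of one-variable polynomials inside $H^{\infty}(\D^n)$.
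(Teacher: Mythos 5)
Your overall skeleton matches the paper's proof of Theorem \ref{thm Douglas}: reduce ``finite rank projection'' to commutativity of the two inner projections, invoke Theorem \ref{thm intro inner}(2) (i.e.\ Theorem \ref{thm n var proj}) to write $\vp_i=\tilde{\vp}_i\psi$, force $\psi$ to be a unimodular constant because nonconstant inner functions in $n\geq 2$ variables have infinite-dimensional model spaces, and then identify $\clq_{\vp_1}\cap\clq_{\vp_2}$ as a tensor product; your $n=2$ case is exactly the paper's computation $\clq_{\vp_1}\cap\clq_{\vp_2}=\clq_{\sigma_1}\otimes\clq_{\sigma_2}$. Where you genuinely deviate is twofold. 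First, for $n>2$ the paper applies the Ahern--Clark theorem a second time: $(\clq_{\vp_1}\cap\clq_{\vp_2})^\perp=\overline{\text{span}}\{\vp_1H^2(\D^n),\vp_2H^2(\D^n)\}$ is generated by two functions and $2<n$, so finite codimension forces it to be all of $H^2(\D^n)$ and the product is $0$; you instead split the variables into $I_1,I_2,I_0$ and read off infinite dimensionality from the factor $H^2(\D^{I_0})$ or from a model space in at least two variables. That version is correct and, granted the auxiliary lemma, somewhat more self-contained. Second, you propose your own proof of the auxiliary lemma, which the paper gets for free from Ahern--Clark (Corollary \ref{cor inf dim QM}). (Minor point: Theorem \ref{thm intro inner}(2) is stated for nonconstant inner functions, so the constant case should be split off at the start, as the paper does; you do absorb it correctly at the end.)

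The genuine soft spot is precisely that proposed proof of the lemma $\dim\clq_\psi=\infty$. The first step is fine: finite codimension $d$ produces, for each $i$, a nonzero polynomial $q_i(z_i)\in\psi H^2(\D^n)$. But the decisive step, the ``rigidity of inner divisors of a one-variable polynomial,'' is exactly the nontrivial content, and as written it is unsupported: you cannot appeal to ``the inner-outer factorisation in $H^{\infty}(\D^n)$,'' since inner-outer factorization fails in general for $n\geq 2$. What you actually need is the specific statement that an inner function on $\D^n$ which divides $q_i(z_i)$ in $H^2(\D^n)$ must be a unimodular constant times a finite Blaschke product in the single variable $z_i$. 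This is true, but it requires an argument of its own, for instance: write $q_i=B\,O$ with $B$ a finite Blaschke product in $z_i$ and $O$ a polynomial in $z_i$ without zeros in $\D$; since $O(z_i)$ is cyclic in $H^2(\D^n)$, the inclusion $q_i\in\psi H^2(\D^n)$ upgrades to $BH^2(\D^n)\subseteq\psi H^2(\D^n)$, i.e.\ $B=\psi u$ with $u$ inner; then a slice argument (fix the remaining variables in $\D^{n-1}$, note $|B|\to 1$ uniformly at the boundary of the $z_i$-disc, so each slice of $\psi$ is a sub-Blaschke product of $B$ with a unimodular constant that is holomorphic in the other variables, hence constant). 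Unless you supply such an argument -- or simply cite the Ahern--Clark corollary as the paper does -- the lemma, and with it the elimination of $\psi$ in part (1) and the whole of part (2), rests on an unproved claim.
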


It is intriguing to notice from a general perspective (as also we will see in the proof of the above result) that the real key to the solution to Douglas's question was concealed in the classic Brown and Douglas paper \cite{BH}. One must, of course, go through a number of nontrivial generalizations and refinements of classical results.

It is important to note that from the case of $n=1$ to the case of $n=2$, and even to the case of $n>2$, the theory of Hilbert function spaces and the theory of commuting tuples of operators differ significantly. Numerous concrete problems are the subject of ongoing research, such as the dilation theory, von Neumann inequality, interpolation problem, model spaces, invariant subspaces. The result above shows again that there is a striking difference between the model spaces when $n=2$ and when $n>2$. In terms of model spaces, the abrupt shift from the $n = 2$ case to the $n > 2$ case was previously noticed in \cite{DGS} as well as in \cite{JS}. When working with Hilbert function spaces on the polydisc, the fundamental reason for such a shift in results is unclear. This could be related to other puzzling issues, such as the failure of von Neumann inquality in more than two variables and the significantly intricate structure of tuples of commuting isometries.

The second application is a classification of truncated Toeplitz operators with inner symbols that are partially isometric. Denote by $L^{\infty}(\mathbb{T}^n)$ the von Neumann algebra of essentially bounded Lebesgue measurable functions on $\mathbb{T}^n$. Let ${\varphi_1} \in H^\infty(\D^n)$ be an inner function and let ${\varphi_2}\in L^{\infty}(\mathbb{T}^n)$.  The \textit{truncated Toeplitz operator} $T^{{\varphi_1}}_{{\varphi_2}}$ on $\clq_{\varphi_1}$ with symbol ${\varphi_2} \in L^{\infty}(\mathbb{T}^n)$ is the bounded linear operator $T^{{\varphi_1}}_{{\varphi_2}}: \clq_{\varphi_1} \raro \clq_{\varphi_1}$ defined by
\[
T^{{\varphi_1}}_{{\varphi_2}}f=P_{\clq_{{\varphi_1}}} ({\varphi_2} f),
\]
for all $f\in \clq_{{\varphi_1}}$. In particular, if ${\varphi_2}\in H^{\infty}(\D^n)$, then it is easy to see that
\[
[T^{{\varphi_1}}_{{\varphi_2}}, T^{{\varphi_1}}_{z_j}] = 0,
\]
for all $j=1, \ldots, n$. It is important to observe that
\[
T^{{\varphi_1}}_{z_j} = P_{\clq_{{\varphi_1}}}M_{z_j}|_{\clq_{{\varphi_1}}} \qquad (j=1, \ldots, n),
\]
and the $n$-tuple $(T^{{\varphi_1}}_{z_1}, \ldots, T^{{\varphi_1}}_{z_n})$ is the \textit{model operator} (or the tuple of \textit{compressed
shifts}) on the model space $\clq_{\varphi_1}$. The following concrete question is of particular interest:

\begin{Question}\label{Q: 1}
Characterize symbols ${\varphi_2} \in L^{\infty}(\T^n)$ and inner functions ${\varphi_1} \in H^{\infty}(\D^n)$ such that $T^{{\varphi_1}}_{{\varphi_2}}$ is a partial isometry.
\end{Question}

Despite its naturalness, the roots of this question may be traced back to Brown and Douglas's classic study on single variable partially isometric Toeplitz operators \cite{BH} and subsequently to the recent paper on the polydisc \cite{DDJ}. For truncated Toeplitz operators with inner symbols, we have the following solution to the above problem:

\begin{thm}\label{thm intro TTO}
Let $n > 1$, and let ${\varphi_1}, {\varphi_2} \in H^{\infty}(\D^n)$ be non-constant inner functions. Then $T^{{\varphi_1}}_{{\varphi_2}}$ is a partial isometry if and only if there exist inner functions $\tilde{{\varphi_1}}, \tilde{{\varphi_2}}, \psi \in H^{\infty}(\D^n)$ such that $\tilde{{\varphi_1}}$ and $\tilde{{\varphi_2}}$ are separated and
\[
{\varphi_1}=\tilde{{\varphi_1}} \psi \text{ and } {\varphi_2}=\tilde{{\varphi_2}} \psi.
\]
\end{thm}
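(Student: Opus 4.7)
The plan is to reduce the partial isometry condition on $T^{\varphi_1}_{\varphi_2}$ to the commutation $[P_{\varphi_1 H^2(\mathbb{D}^n)},\, P_{\varphi_2 H^2(\mathbb{D}^n)}]=0$, at which point Theorem \ref{thm intro inner}(2) delivers the factorization. The bridge is a clean identity for $T^{\varphi_1}_{\varphi_2}(T^{\varphi_1}_{\varphi_2})^*$ that is available only because $\varphi_2$ is inner.

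I would begin by verifying that $\mathcal{Q}_{\varphi_1}$ is $M_{\varphi_2}^*$-invariant: for $f \in \mathcal{Q}_{\varphi_1}$ and $g \in H^2(\mathbb{D}^n)$, $\langle M_{\varphi_2}^* f,\, \varphi_1 g\rangle = \langle f,\, \varphi_1 \varphi_2 g\rangle = 0$. This forces $(T^{\varphi_1}_{\varphi_2})^* = M_{\varphi_2}^*|_{\mathcal{Q}_{\varphi_1}}$. Since $\varphi_2$ is inner, $M_{\varphi_2} M_{\varphi_2}^* = P_{\varphi_2 H^2(\mathbb{D}^n)}$, so
\[
T^{\varphi_1}_{\varphi_2}(T^{\varphi_1}_{\varphi_2})^* \;=\; P_{\mathcal{Q}_{\varphi_1}} P_{\varphi_2 H^2(\mathbb{D}^n)}\big|_{\mathcal{Q}_{\varphi_1}}.
\]
Recall that an operator $T$ is a partial isometry if and only if $TT^*$ is an orthogonal projection. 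Extending the above operator by zero from $\mathcal{Q}_{\varphi_1}$ to $H^2(\mathbb{D}^n)$, the question becomes whether $P_{\mathcal{Q}_{\varphi_1}} P_{\varphi_2 H^2(\mathbb{D}^n)} P_{\mathcal{Q}_{\varphi_1}}$ is a projection. This is settled by the classical lemma that, for orthogonal projections $P, Q$ on a Hilbert space, $PQP$ is a projection if and only if $PQ = QP$: one direction is immediate, and for the other, $(PQP)^2 = PQP$ yields $\|PQPf\|^2 = \|QPf\|^2$, forcing $QPf \in \operatorname{ran} P$ for every $f$ and hence $PQ = QP$ after taking adjoints. Since $P_{\mathcal{Q}_{\varphi_1}} = I - P_{\varphi_1 H^2(\mathbb{D}^n)}$, this gives
\[
T^{\varphi_1}_{\varphi_2} \text{ is a partial isometry} \iff [P_{\varphi_1 H^2(\mathbb{D}^n)},\, P_{\varphi_2 H^2(\mathbb{D}^n)}] = 0,
\]
and Theorem \ref{thm intro inner}(2) (applicable since $n > 1$) then supplies inner functions $\tilde{\varphi_1}, \tilde{\varphi_2}, \psi \in H^\infty(\mathbb{D}^n)$ with $\tilde{\varphi_1}, \tilde{\varphi_2}$ separated, $\varphi_1 = \tilde{\varphi_1}\psi$, and $\varphi_2 = \tilde{\varphi_2}\psi$.

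The main obstacle is really choosing the right reformulation at the outset. One must compute $T^{\varphi_1}_{\varphi_2}(T^{\varphi_1}_{\varphi_2})^*$ rather than $(T^{\varphi_1}_{\varphi_2})^* T^{\varphi_1}_{\varphi_2}$, because only for $TT^*$ does the inner-ness of $\varphi_2$ produce the identity $M_{\varphi_2}M_{\varphi_2}^* = P_{\varphi_2 H^2(\mathbb{D}^n)}$ that reduces the expression to a product of two orthogonal projections. Working instead from $(T^{\varphi_1}_{\varphi_2})^* T^{\varphi_1}_{\varphi_2} = I - M_{\varphi_2}^* P_{\varphi_1 H^2(\mathbb{D}^n)} M_{\varphi_2}|_{\mathcal{Q}_{\varphi_1}}$ produces an expression whose idempotency does not visibly reduce to a commutator of the two natural projections, and would obscure the bridge to Theorem \ref{thm intro inner}(2).
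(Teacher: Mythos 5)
Your proof is correct, and it takes a genuinely different (and leaner) route than the paper. The paper splits the theorem: the forward implication goes through Lemma 4.1 and Proposition 4.2, where the kernel of $T^{\varphi_1}_{\varphi_2}$ is identified as $\overline{M_{\varphi_2}^*\varphi_1 \mathcal{Q}_{\varphi_2}}$ and the commutator identity $[P_{\varphi_1 H^2(\mathbb{D}^n)},P_{\varphi_2 H^2(\mathbb{D}^n)}]=0$ is extracted by testing on vectors $\varphi_1 g$ with $g\in\mathcal{Q}_{\varphi_2}$ and on $\varphi_2 H^2(\mathbb{D}^n)$ separately; the converse is then a separate direct computation, using the separated-functions identity $M_{\tilde\varphi_2}^*M_{\tilde\varphi_1}=M_{\tilde\varphi_1}M_{\tilde\varphi_2}^*$, showing $P_{\mathcal{Q}_{\varphi_1}}M_{\varphi_2}P_{\mathcal{Q}_{\varphi_1}}=M_{\varphi_2}P_{\mathcal{Q}_{\tilde\varphi_1}}$ is a partial isometry. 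You instead note that $\mathcal{Q}_{\varphi_1}$ is $M_{\varphi_2}^*$-invariant, so $(T^{\varphi_1}_{\varphi_2})^*=M_{\varphi_2}^*|_{\mathcal{Q}_{\varphi_1}}$ and, since $\varphi_2$ is inner, $T^{\varphi_1}_{\varphi_2}(T^{\varphi_1}_{\varphi_2})^*=P_{\mathcal{Q}_{\varphi_1}}P_{\varphi_2 H^2(\mathbb{D}^n)}|_{\mathcal{Q}_{\varphi_1}}$; combined with the classical fact that $PQP$ is a projection iff $PQ=QP$ (your sketch of both directions of that lemma is sound), this yields the clean two-way statement that $T^{\varphi_1}_{\varphi_2}$ is a partial isometry iff $[P_{\varphi_1 H^2(\mathbb{D}^n)},P_{\varphi_2 H^2(\mathbb{D}^n)}]=0$, after which Theorem \ref{thm intro inner}(2) settles both implications at once. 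What each approach buys: yours is shorter, symmetric in the two directions, and makes the duality of Corollary \ref{cor: duality 1} (swapping $\varphi_1$ and $\varphi_2$) completely transparent, since the commutator condition is symmetric; the paper's longer route generates by-products it reuses later, namely the kernel and initial-space descriptions of Lemma \ref{lem TTO} and the identity $P_{\mathcal{Q}_{\varphi_1}}M_{\varphi_2}P_{\mathcal{Q}_{\varphi_1}}=M_{\varphi_2}P_{\mathcal{Q}_{\tilde\varphi_1}}$, which feed directly into the classification of isometric truncated Toeplitz operators in Theorem \ref{thm isom TTO}.
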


The $n=1$ case is outlined in Corollary \ref{cor PI n=1}. We also derive some other consequences from the above theorem, such as the classification of isometric truncated Toeplitz operators, a duality between ${\varphi_1}$ and ${\varphi_2}$, and so on.

Now we briefly describe the development of truncated Toeplitz operators. Sarason's seminal paper \cite{DS} essentially embraced the concept of truncated Toeplitz operators. Sarason was, however, inspired by a number of influential works in Hilbert function space theory, such as the Sz.-Nagy and Foias analytic model theory \cite{Bs NF}, his own commutant lifting theorem \cite{DS 1}, the celebrated $H^\infty$ functional calculus for contractions \cite{Ber}, to mention a few. Even in such a short period of time, this concept has evolved significantly, and it is now a topic of its own.

Finally, some (historical as well as well-known) thoughts on invariant subspaces and model spaces of $H^2(\D^n)$, $n \geq 1$. Suppose $n = 1$. Then the celebrated Beurling theorem \cite{AB} says that a nonzero closed subspace $\cls \subseteq H^2(\D)$ is invariant under $M_z$ if and only if there exists an inner function ${\varphi} \in H^\infty(\D)$ such that
\[
\cls = {\varphi} H^2(\D).
\]
Consequently, $M_z^*$-invariant subspaces of $H^2(\D)$ are precisely the model spaces (see \eqref{eqn: QM})
\[
\clq_{\varphi} = H^2(\D) /{\varphi} H^2(\D),
\]
for inner functions ${\varphi} \in H^\infty(\D)$. In particular, shift invariant subspaces of $H^2(\D)$ are cyclic and generated by inner functions in $H^{\infty}(\D)$. This view has been an important part of the theory of bounded linear operators and the theory of Hilbert function spaces. If $n > 1$, however, the structure of general shift-invariant subspaces becomes significantly more complex. In other words, the structure of joint invariant subspaces of $(M_{z_1}, \ldots, M_{z_n})$ is baffling, whereas model spaces and model operators are similarly puzzling whenever $n > 1$ (cf. \cite{GW, GGK}). Also see \cite{BCS} in the context of Clark theory on the bidisc. All in all, the operator theory and function theory on the polydisc is thought to be much more subtle and mysterious.

The remainder of the paper is structured as follows: Section \ref{sec: inner proj} is devoted to establishing the paper's central result. Section \ref{sec Douglas} answers Douglas's question, while Section \ref{sec TTO} gives classifications of partially isometric truncated Toeplitz operators. Section \ref{sec example} concludes with some illustrated examples based on the core results obtained in this paper.

\newsection{Pairs of inner projections}\label{sec: inner proj}

The goal of this section is to prove the central result of this paper. We start by reviewing the fundamentals of partial isometries on Hilbert spaces, which will be used throughout the course of the investigation.

Let $T$ be a bounded linear operator on $\clh$. We say that $T$ is a \textit{partial isometry} if $T|_{(\ker T)^\perp}$ is an isometry, that is
\[
\|Th\|=\|h\| \qquad (h\in (\ker T)^\perp).
\]
If $T$ is a partial isometry, then $(\ker T)^\perp$ is referred to as the \textit{initial space} of $T$ and $\text{ran} T$ as the \textit{final space}. The following partial isometry characterizations are well-known and easy to prove (see \cite[Proposition 4.38]{RGD}): Let $T$ be a bounded linear operator on $\clh$. The following are equivalent:

\begin{enumerate}
\item $T$ is a partial isometry.
\item $T^*$ is a partial isometry.
\item $TT^*T=T$.
\item $T^*T$ is a projection.
\item $TT^*$ is a projection.
\end{enumerate}

The classical result of Brown and Douglas \cite{BH} and its subsequent generalization to several variables \cite{DDJ} serve as the foundation for proving the main result of this section. We recollect the general version from \cite[Theorem 1.1]{DDJ} because it will be used throughout the work. Recall that the \textit{Toeplitz operator} with symbol $\vp \in L^\infty(\T^n)$ is the bounded linear operator $T_\vp$ on $H^2(\D^n)$ defined by
\[
T_{\vp} = P_{H^2(\D^n)} L_\vp|_{H^2(\D^n)},
\]
where $L_\vp$ denotes the Laurent operator on $L^2(\T^n)$. Recall that
\[
L_\vp f = \vp f,
\]
for all $f \in L^2(\T^n)$.

\begin{thm}\label{thm KD}
Let $\vp$ be a nonzero function in $L^\infty(\T^n)$. The following hold:
\begin{enumerate}
\item If $n=1$, then $T_\vp$ is a partial isometry if and only if $T_\vp$ is either an isometry, or a coisometry.
\item If $n > 1$, then $T_\vp$ is a partial isometry if and only if there exist separated inner functions $\vp_1$ and $\vp_2$ in $H^\infty(\D^n)$ such that
\[
T_\vp = T_{\vp_1}^* T_{\vp_2}.
\]
\end{enumerate}
\end{thm}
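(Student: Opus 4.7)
The plan is to reduce to the equivalent characterization that $T_\varphi$ is a partial isometry iff $T_\varphi^* T_\varphi$ is a projection, and then to exploit the tensor product decomposition of the polydisc Hardy space together with classical one-variable identities for Toeplitz-Hankel products.

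For the ``if'' direction of (2), given separated inner functions $\varphi_1, \varphi_2 \in H^\infty(\D^n)$, I would relabel coordinates so that $\varphi_1$ depends only on $z_1, \ldots, z_k$ and $\varphi_2$ only on $z_{k+1}, \ldots, z_n$. Under the factorization $H^2(\D^n) = H^2(\D^k) \otimes H^2(\D^{n-k})$ one has $T_{\varphi_1} = M_{\varphi_1} \otimes I$ and $T_{\varphi_2} = I \otimes M_{\varphi_2}$, and a direct computation yields
\[
(T_{\varphi_1}^* T_{\varphi_2})^* (T_{\varphi_1}^* T_{\varphi_2}) = M_{\varphi_1} M_{\varphi_1}^* \otimes M_{\varphi_2}^* M_{\varphi_2} = P_{\varphi_1 H^2(\D^k)} \otimes I = P_{\varphi_1 H^2(\D^n)},
\]
a projection; hence $T_{\varphi_1}^* T_{\varphi_2}$ is a partial isometry. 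For $n=1$ the ``if'' direction is immediate, since isometries and coisometries are by definition partial isometries.

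For the ``only if'' direction when $n=1$, I would work through the classical Brown-Douglas identity $T_\varphi^* T_\varphi = T_{|\varphi|^2} - H_\varphi^* H_\varphi$, where $H_\varphi$ denotes the Hankel operator. If $T_\varphi^* T_\varphi$ is a projection, this expresses $T_{|\varphi|^2}$ as a projection plus a nonnegative operator. Exploiting the Brown--Halmos rigidity that the only Toeplitz projections on $H^2(\D)$ are $0$ and $I$, together with the symmetric identity applied to $T_\varphi T_\varphi^* = T_{|\varphi|^2} - H_{\bar\varphi}^* H_{\bar\varphi}$, one pins down $|\varphi|=1$ a.e.\ on $\T$ and then shows $H_\varphi = 0$ or $H_{\bar\varphi}=0$, which forces $\varphi \in H^\infty(\D)$ (isometry) or $\bar\varphi \in H^\infty(\D)$ (coisometry).

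The ``only if'' direction when $n>1$ is the substantive part of the theorem and is where I expect the main obstacle. The strategy would be induction on $n$ via slicing: fix the last coordinate and view $T_\varphi$ as an operator-valued Toeplitz operator on $H^2(\D) \otimes H^2(\D^{n-1})$, whose symbol is an operator-valued function on $\T$. The projection condition on $T_\varphi^* T_\varphi$ should, via an operator-valued analogue of Brown--Douglas applied fiberwise, force a rigid decoupling of $\varphi$ between the chosen variable and the remaining ones. The hardest step is to upgrade this fiberwise decoupling into a global factorization $T_\varphi = T_{\varphi_1}^* T_{\varphi_2}$ with $\varphi_1, \varphi_2$ inner and depending on \emph{disjoint} variable subsets—ruling out partially mixed symbols that are a priori compatible with the partial-isometry constraint. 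I expect this to require rank arguments for polydisc Hankel operators, a bootstrap patching together the one-variable conclusions across slices, and a descent argument controlling the variable-sets appearing in each candidate factor.
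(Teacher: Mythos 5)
There is a genuine gap here, and it is worth noting first that the paper does not prove this statement at all: part (1) is the classical Brown--Douglas theorem \cite{BH}, and part (2) is quoted verbatim from \cite[Theorem 1.1]{DDJ}, so your attempt is in effect an attempt to reprove the main theorem of \cite{DDJ}. Your ``if'' direction of (2) is correct (the tensor computation $(T_{\vp_1}^*T_{\vp_2})^*(T_{\vp_1}^*T_{\vp_2})=M_{\vp_1}M_{\vp_1}^*\otimes M_{\vp_2}^*M_{\vp_2}$ does give a projection), and the ``if'' direction of (1) is trivial. But the ``only if'' direction for $n>1$, which is the entire substance of the theorem, is not proved: you outline a slicing strategy and then explicitly defer the decisive steps (``I expect this to require \dots'', ``the hardest step is to upgrade \dots''). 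Worse, the proposed reduction appeals to ``an operator-valued analogue of Brown--Douglas applied fiberwise,'' and no such analogue holds in the form you need: for operator- (even $2\times 2$ matrix-) valued symbols a partially isometric Toeplitz operator need not be an isometry or a coisometry (e.g.\ the block symbol $\mathrm{diag}(z,\bar z)$ gives $M_z\oplus M_z^*$). Ruling out ``partially mixed'' symbols and producing the separated inner factors is exactly the content of \cite[Theorem 1.1]{DDJ}; your sketch contains no argument for it, so this direction is missing rather than merely compressed.

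There is also a smaller gap in your ``only if'' argument for $n=1$. The intermediate claim $|\vp|=1$ a.e.\ can indeed be completed from $T_\vp^*T_\vp=T_{|\vp|^2}-H_\vp^*H_\vp$ (if $1-|\vp|^2$ were nonzero on a set of positive measure, any $f\in\ker(I-T_\vp^*T_\vp)^{\perp\perp}$, i.e.\ $f$ in the range of the projection $I-T_\vp^*T_\vp=T_{1-|\vp|^2}+H_\vp^*H_\vp$ with $(I-T_\vp^*T_\vp)f=0$, forces $\int(1-|\vp|^2)|f|^2\,dm=0$ and hence $f=0$, so $T_\vp^*T_\vp=I$ or one contradicts $\vp\neq 0$). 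However, the final dichotomy ``$H_\vp=0$ or $H_{\bar\vp}=0$'' does not follow from the Brown--Halmos rigidity you invoke (that the only Toeplitz projections are $0$ and $I$); once $|\vp|=1$ you only know that $H_\vp$ and $H_{\bar\vp}$ are partial isometries. What is needed is a Coburn-type kernel argument: for nonzero $\vp$, at least one of $\ker T_\vp$, $\ker T_{\bar\vp}$ is trivial (if $T_\vp f=0$ and $T_{\bar\vp}g=0$ with $f,g\neq 0$, then $\vp f\bar g$ lies in $H^1_0\cap\overline{H^1_0}=\{0\}$, forcing $\vp=0$), and a partial isometry with trivial kernel (resp.\ trivial cokernel) is an isometry (resp.\ coisometry). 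As written, the step you assert is essentially the statement being proved.
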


Take note of the variations between the $n=1$ and $n>1$ case. This distinction will also be reflected in the results of one versus more than one variable in what follows. First, we focus on inner functions in $H^\infty(\D)$.

\begin{thm}\label{thm 1 var proj}
Let $\vp_1,\vp_2\in H^{\infty}(\D)$ be nonconstant inner functions. The following are equivalent:
\begin{enumerate}
\item $[P_{\vp_1H^2(\D)}, P_{\vp_2H^2(\D)}] = 0$.
\item $\vp_1H^2(\D) \subseteq \vp_2H^2(\D)$ or $\vp_2 H^2(\D) \subseteq \vp_1 H^2(\D)$.
\item $\vp_1$ divides $\vp_2$ or $\vp_2$ divides $\vp_1$.
\end{enumerate}

\end{thm}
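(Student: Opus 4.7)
The plan is to cycle $(2) \Leftrightarrow (3) \Rightarrow (1) \Rightarrow (3)$, with the last arrow carrying the analytic content. The equivalence $(2) \Leftrightarrow (3)$ is a bookkeeping exercise with Beurling's theorem: the containment $\vp_2 H^2(\D) \subseteq \vp_1 H^2(\D)$ says exactly that $\vp_2/\vp_1 \in H^2(\D)$, and the quotient is automatically unimodular on $\T$ and hence inner. The implication $(2) \Rightarrow (1)$ is the elementary Hilbert-space fact that $P_V P_W = P_W P_V = P_V$ whenever $V \subseteq W$.

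The substantive step is $(1) \Rightarrow (3)$, and the idea is to squeeze out of the commutation relation an $H^2(\D)$ Toeplitz operator which is forced to be a partial isometry, and then let Theorem~\ref{thm KD}(1) classify it. Set $\psi := \bar{\vp}_1 \vp_2 \in L^\infty(\T)$. Since $M_{\vp_i}$ is an isometry with range $\vp_i H^2(\D)$,
\[
P_{\vp_i H^2(\D)} = M_{\vp_i} M_{\vp_i}^*, \qquad T_\psi = M_{\vp_1}^* M_{\vp_2}, \qquad T_\psi^* = M_{\vp_2}^* M_{\vp_1}.
\]
The hypothesis that $P_{\vp_1 H^2(\D)}$ and $P_{\vp_2 H^2(\D)}$ commute makes their product a projection, so $(P_{\vp_1 H^2(\D)} P_{\vp_2 H^2(\D)})^2 = P_{\vp_1 H^2(\D)} P_{\vp_2 H^2(\D)}$; compressing this identity by $M_{\vp_2}^*$ on the left and $M_{\vp_2}$ on the right, and using $M_{\vp_2}^* M_{\vp_2} = I$ to cancel the trailing factors, reduces it to $(T_\psi^* T_\psi)^2 = T_\psi^* T_\psi$. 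Thus $T_\psi^* T_\psi$ is a projection and $T_\psi$ is a partial isometry.

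Theorem~\ref{thm KD}(1) now forces $T_\psi$ to be either an isometry or a coisometry. Recalling that a Toeplitz operator $T_f$ with $L^\infty(\T)$ symbol is an isometry precisely when $f$ is inner, the isometry case gives that $\psi = \bar{\vp}_1 \vp_2$ is inner, i.e.\ $\vp_1$ divides $\vp_2$, while the coisometry case gives that $\bar\psi = \vp_1 \bar{\vp}_2$ is inner, i.e.\ $\vp_2$ divides $\vp_1$. The principal obstacle is isolating the correct sandwich that turns the abstract projection identity into a statement about a Toeplitz operator; once that move is in place, the one-variable Brown and Douglas classification finishes the argument.
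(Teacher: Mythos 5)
Your proposal is correct and follows essentially the same route as the paper: both reduce the commutation hypothesis to the statement that the Toeplitz operator $M_{\vp_1}^*M_{\vp_2}=T_{\bar{\vp}_1\vp_2}$ is a partial isometry and then invoke the one-variable Brown--Douglas classification (Theorem~\ref{thm KD}(1)). The only cosmetic differences are that you establish partial isometry by compressing the idempotency of $P_{\vp_1H^2(\D)}P_{\vp_2H^2(\D)}$ rather than verifying $TT^*T=T$ directly, and you pass from ``isometric Toeplitz operator $\Rightarrow$ inner symbol'' straight to divisibility, whereas the paper returns to the projection identity and uses Douglas's range inclusion theorem.
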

\begin{proof}
Given that $\vp_1$ and $\vp_2$ are both inner functions and in light of Douglas's range inclusion theorem, it suffices to prove that (1) and (3) are equivalent. However, a portion of the proof for part (2) will be also shown in the proof that follows. For notational simplicity, assume $\cls_j=\vp_j H^2(\D)$, $j=1,2$. Suppose $[P_{\cls_1}, P_{\cls_2}] = 0$, that is
\[
P_{\cls_1} P_{\cls_2} = P_{\cls_2} P_{\cls_1}.
\]
Since $P_{\cls_i} = M_{\vp_i} M_{\vp_i}^*$, $i=1,2$, it follows that
\begin{equation}\label{eqn: proj comm}
(M_{\vp_1}M_{\vp_1}^*) (M_{\vp_2} M_{\vp_2}^*) = (M_{\vp_2}M_{\vp_2}^*) (M_{\vp_1}M_{\vp_1}^*).
\end{equation}
Observe that
\[
\begin{split}
M_z^*(M_{\vp_1}^* M_{\vp_2})M_z & = M_{\vp_1}^* M_z^* M_z M_{\vp_2}
\\
& = M_{\vp_1}^* M_{\vp_2},
\end{split}
\]
that is, $M_{\vp_1}^* M_{\vp_2}$ is a Toeplitz operator on $H^2(\D)$. This can also be proved in a variety of ways, such as the following: Since $\vp_1, \vp_2 \in H^\infty(\D)$, we conclude that $\bar{\vp}_1 \vp_2 \in L^\infty(\T)$ and $M_{\vp_1}^* M_{\vp_2} = T_{\bar{\vp}_1 \vp_2}$, where $T_{\bar{\vp}_1 \vp_2}$ is the Toeplitz operator with symbol $\bar{\vp}_1 \vp_2$. Now we claim that $T: = M_{\vp_1}^* M_{\vp_2}$ is a partial isometry. Indeed, in view of \eqref{eqn: proj comm}, we have
\[
\begin{split}
TT^*T & = M_{\vp_1}^* (M_{\vp_2}M_{\vp_2}^*) (M_{\vp_1}M_{\vp_1}^*) M_{\vp_2}
\\
& = M_{\vp_1}^* (M_{\vp_1}M_{\vp_1}^*)(M_{\vp_2}M_{\vp_2}^*)M_{\vp_2}
\\
& = M_{\vp_1}^*M_{\vp_2}
\\
& = T,
\end{split}
\]
which proves the claim. By part (1) of Theorem \ref{thm KD} (that is, the Brown and Douglas theorem in \cite{BH}), there exists an inner function $\vp\in H^{\infty}$ such that $T = M_{\vp}$ or $T = M_{\vp}^*$. Suppose $T=M_{\vp}$. Then $M_{\vp_1}^* M_{\vp_2} = M_\vp$ in particular implies that $M_{\vp_1}^* M_{\vp_2}$ is an isometry, that is
\[
M_{\vp_2}^*M_{\vp_1}M_{\vp_1}^* M_{\vp_2} = I.
\]
Multiplying by $ M_{\vp_2}$ from the left and $M_{\vp_2}^*$ from the right, we get
\[
M_{\vp_2}M_{\vp_2}^* M_{\vp_1}  M_{\vp_1}^* M_{\vp_2}M_{\vp_2}^* =M_{\vp_2}M_{\vp_2}^*,
\]
that is
\[
P_{\cls_2}P_{\cls_1}P_{\cls_2} = P_{\cls_2}.
\]
Since $P_{\cls_1}$ commutes with $P_{\cls_2}$, by our assumption, it follows that
\[
P_{\cls_2}=P_{\cls_1}P_{\cls_2},
\]
and hence
\[
\cls_2 = \vp_2 H^2(\D) \subseteq \cls_1 = \vp_1 H^2(\D).
\]
By Douglas's range inclusion theorem, we conclude that $\vp_1$ divides $\vp_2$ (also see \cite[page 20, Lemma 2.1]{Ber} ). Similarly, if $T$ is a coisometry, then $\vp_2$ devides $\vp_1$.

\NI For the converse direction, suppose that $\vp_1$ divides $\vp_2$. Equivalently, there exists an inner function $\vp\in H^{\infty}(\D)$ such that $\vp_2=\vp\vp_1$. Then
\[
\begin{split}
P_{\cls_1}P_{\cls_2} & = M_{\vp_1}M_{\vp_1}^*M_{\vp_2}M_{\vp_2}^*
\\
& = M_{\vp_1} M_{\vp_1}^* M_{\vp_1}M_{\vp}M_{\vp_1}^*M_{\vp}^*
\\
& = M_{\vp_1}M_{\vp}M_{\vp_1}^*M_{\vp}^*
\\
& = M_{\vp_2}M_{\vp_2}^*
\\
& =P_{\cls_2}.
\end{split}
\]
In particular
\[
\begin{split}
P_{\cls_2}P_{\cls_1} & = P_{\cls_1} P_{\cls_2} P_{\cls_1}
\\
& = P_{\cls_1} (P_{\cls_1} P_{\cls_2})^*
\\
& = P_{\cls_1} (P_{\cls_2})^*
\\
& = P_{\cls_1} P_{\cls_2}.
\end{split}
\]
The same conclusion holds if $\vp_2$ divides $\vp_1$. This completes the proof of the theorem.
\end{proof}

Now we turn to the case of several variables. Let us record the following simple fact (see the equality (3.3) in \cite{DDJ}): Let $\vp_1$ and $\vp_2$ be separated functions in $H^\infty(\D^n)$. Then
\begin{equation}\label{eqn: dc of TO}
M_{\vp_1}^* M_{\vp_2} = M_{\vp_2} M_{\vp_1}^*.
\end{equation}

\begin{thm}\label{thm n var proj}
Let $n > 1$, and let $\vp_1,\vp_2\in H^{\infty}(\D^n)$ be nonconstant inner functions. Then
\[
[P_{\vp_1H^2(\D^n)}, P_{\vp_2 H^2(\D^n)}] = 0,
\]
if and only if there exist inner functions $\tilde{\vp}_1, \tilde{\vp}_2, \psi \in H^\infty(\D^n)$ such that $\tilde{\vp}_1$ and $\tilde{\vp}_2$ are separated, and such that 
\[
\vp_1= \tilde{\vp}_1 \psi \text{ and } \vp_2 = \tilde{\vp}_2 \psi.
\]
\end{thm}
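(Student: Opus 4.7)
The strategy mirrors the one-variable proof (Theorem \ref{thm 1 var proj}): reduce the commuting-projection hypothesis to a partial isometry statement about a Toeplitz operator, apply the multivariable Brown--Douglas classification (Theorem \ref{thm KD}(2)), and then extract a common inner factor $\psi$ from the resulting symbol identity. The sufficiency is a direct algebraic computation, whereas the necessity has a genuine new difficulty in the last step.

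For the sufficiency direction, suppose $\vp_1 = \tilde{\vp}_1 \psi$ and $\vp_2 = \tilde{\vp}_2 \psi$ with $\tilde{\vp}_1, \tilde{\vp}_2$ separated, so $M_{\vp_i} = M_\psi M_{\tilde{\vp}_i}$. Using that $M_\psi$ is an isometry together with the separated commutation $M_{\tilde{\vp}_1}^* M_{\tilde{\vp}_2} = M_{\tilde{\vp}_2} M_{\tilde{\vp}_1}^*$ from \eqref{eqn: dc of TO}, a direct computation telescopes
\[
P_{\vp_1 H^2(\D^n)} P_{\vp_2 H^2(\D^n)} = M_\psi M_{\tilde{\vp}_1 \tilde{\vp}_2} M_{\tilde{\vp}_1 \tilde{\vp}_2}^* M_\psi^*,
\]
which is manifestly symmetric in $(\tilde{\vp}_1, \tilde{\vp}_2)$, hence equals $P_{\vp_2 H^2(\D^n)} P_{\vp_1 H^2(\D^n)}$.

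For necessity, set $T := M_{\vp_1}^* M_{\vp_2}$; the identity $M_{z_j}^* T M_{z_j} = T$ for each $j$ identifies $T$ as the Toeplitz operator $T_{\bar{\vp}_1 \vp_2}$. Exactly as in Theorem \ref{thm 1 var proj}, the hypothesis $[P_{\vp_1 H^2(\D^n)}, P_{\vp_2 H^2(\D^n)}] = 0$ yields $T T^* T = T$, so $T$ is a partial isometry. Theorem \ref{thm KD}(2) now furnishes separated inner functions $\tilde{\vp}_1, \tilde{\vp}_2 \in H^\infty(\D^n)$ with $T = M_{\tilde{\vp}_1}^* M_{\tilde{\vp}_2}$. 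Equating Toeplitz symbols gives $\bar{\vp}_1 \vp_2 = \bar{\tilde{\vp}}_1 \tilde{\vp}_2$ a.e.\ on $\T^n$, and hence the key intertwining
\[
\vp_1 \tilde{\vp}_2 = \tilde{\vp}_1 \vp_2 \qquad \text{in } H^\infty(\D^n).
\]

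The main obstacle is to produce an inner $\psi$ with $\vp_i = \tilde{\vp}_i \psi$. Define $\psi$ as the common meromorphic quotient $\vp_1/\tilde{\vp}_1 = \vp_2/\tilde{\vp}_2$; the displayed identity guarantees the two expressions agree on their common domain of definition, so $\psi$ is holomorphic off the analytic set $V := Z(\tilde{\vp}_1) \cap Z(\tilde{\vp}_2)$. Because $\tilde{\vp}_1$ and $\tilde{\vp}_2$ depend on disjoint variables, $V$ has codimension at least two in $\D^n$, so Hartogs' extension theorem extends $\psi$ to a holomorphic function on all of $\D^n$. The radial boundary values satisfy $|\psi^*| = |\vp_1^*|/|\tilde{\vp}_1^*| = 1$ a.e.\ on $\T^n$; combined with the fact that $\psi$ is a quotient of $H^\infty$ functions (hence belongs to the Nevanlinna class on $\D^n$), this forces $\psi \in H^\infty(\D^n)$ with $\|\psi\|_\infty = 1$, so $\psi$ is inner. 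The desired factorizations $\vp_i = \tilde{\vp}_i \psi$ then hold in $H^\infty(\D^n)$. The delicate point, which is absent in one variable, is precisely this Hartogs/Nevanlinna step that promotes the formal ratio $\psi$ to a bona fide inner function; everything else is a faithful translation of the one-variable argument with \eqref{eqn: dc of TO} replacing the unconditional commutation of analytic multipliers.
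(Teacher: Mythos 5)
Your reduction of the commutation hypothesis to the statement that $T=M_{\vp_1}^*M_{\vp_2}$ is a partially isometric Toeplitz operator, and the appeal to Theorem \ref{thm KD}(2), coincide with the paper's argument, as does your sufficiency computation. Where you diverge is the extraction of $\psi$: the paper never argues from the symbol identity alone. It identifies the initial and final spaces of the partial isometry $M_{\vp_1}^*M_{\vp_2}=M_{\tilde{\vp}_1}^*M_{\tilde{\vp}_2}$ as $\tilde{\vp}_1H^2(\D^n)$ and $\tilde{\vp}_2H^2(\D^n)$, deduces the range inclusions $\vp_iH^2(\D^n)\subseteq\tilde{\vp}_iH^2(\D^n)$, hence $\vp_i=\tilde{\vp}_i\psi_i$ with $\psi_i$ inner, and only then uses $\tilde{\vp}_1\vp_2=\tilde{\vp}_2\vp_1$ to conclude $\psi_1=\psi_2$.

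Your substitute for that step has a genuine gap at the boundedness claim. The gluing of $\vp_1/\tilde{\vp}_1$ and $\vp_2/\tilde{\vp}_2$ off $\clz(\tilde{\vp}_1)\cap\clz(\tilde{\vp}_2)$ and the removability of that codimension-two analytic set (this is the Riemann removable singularity theorem rather than Hartogs, a minor point) are fine. But the inference ``$\psi$ is a quotient of $H^\infty$ functions, hence of Nevanlinna class, and $|\psi^*|=1$ a.e., therefore $\psi\in H^\infty(\D^n)$ with $\|\psi\|_\infty=1$'' is false as stated: with $S(z)=\exp\big(-(1+z_1)/(1-z_1)\big)$, the function $1/S$ is holomorphic on $\D^n$, is a quotient of bounded analytic functions, and has unimodular radial limits a.e.\ on $\T^n$, yet it is unbounded. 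What one needs is membership in the Smirnov class $N^+$, and knowing $\psi\in N^+$ is essentially the divisibility you are trying to establish, so as written the step is circular. Notice also that your boundedness argument never uses separatedness, whereas the implication ``$\vp_1\tilde{\vp}_2=\tilde{\vp}_1\vp_2$ with all four functions inner $\Rightarrow$ $\tilde{\vp}_1$ divides $\vp_1$'' fails without it (take $n=1$, $\vp_1=\vp_2$ a Blaschke product and $\tilde{\vp}_1=\tilde{\vp}_2=S$). The gap can be closed by making separatedness do the work: fix the variables on which $\tilde{\vp}_2$ depends at a point where $\tilde{\vp}_2\neq 0$ (and the remaining variables arbitrarily); on that slice $\psi=\vp_2/\tilde{\vp}_2$ is a bounded analytic function of the variables of $\tilde{\vp}_1$, and the slice identity $\vp_1=\tilde{\vp}_1\psi$ together with $|\tilde{\vp}_1^*|=1$ a.e.\ gives $|\psi|\le 1$ on the slice via boundary values; letting the fixed point range over a dense set and using continuity gives $|\psi|\le 1$ on $\D^n$, after which $|\psi^*|=1$ a.e.\ makes $\psi$ inner. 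Alternatively, simply use the paper's range-inclusion argument, which sidesteps the issue entirely.
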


\begin{proof}
Set $\cls_j = \vp_jH^2(\D^n)$, $j=1,2$, and suppose $P_{\cls_1} P_{\cls_2} = P_{\cls_2} P_{\cls_1}$. We define
\[
X=M_{\vp_1}^* M_{\vp_2}.
\]
As in the proof of Theorem \ref{thm 1 var proj}, for each $i=1, \ldots, n$, we have
\[
\begin{split}
M_{z_i}^* X M_{z_i} = X.
\end{split}
\]
Therefore, by the algebraic (Brown and Halmos type) characterizations of Toeplitz operators \cite{MSS}, we conclude that $X$ is a Toeplitz operator. Furthermore, using the commutativity of projections, as in the proof of Theorem \ref{thm 1 var proj}, we again conclude that
\[
XX^*X = X,
\]
and hence $X$ is a partially isometric Toeplitz operator on $H^2(\D^n)$. By part (2) of Theorem \ref{thm KD}, there exist separated inner functions $\tilde{\vp}_1$ and $\tilde{\vp}_2$ in $H^{\infty}(\D^n)$ such that $X = M_{\tilde{\vp}_1}^*M_{\tilde{\vp}_2}$, that is
\[
M_{\vp_1}^*M_{\vp_2} = M_{\tilde{\vp}_1}^*M_{\tilde{\vp}_2}.
\]
This factorization will allow us to determine the initial and final spaces of the partial isometry $M_{\vp_1}^*M_{\vp_2}$. We use \eqref{eqn: dc of TO} and the above identity to compute
\[
\begin{split}
\ker (M_{\vp_1}^*M_{\vp_2}) & = \ker (M_{\tilde{\vp}_1}^*M_{\tilde{\vp}_2})
\\
& = \ker (M_{\tilde{\vp}_2}^*M_{\tilde{\vp}_1} M_{\tilde{\vp}_1}^*M_{\tilde{\vp}_2})
\\
& = \ker (M_{\tilde{\vp}_1} M_{\tilde{\vp}_1}^* M_{\tilde{\vp}_2}^* M_{\tilde{\vp}_2})
\\
& = \ker (M_{\tilde{\vp}_1} M_{\tilde{\vp}_1}^*).
\end{split}
\]
By the fact that $M_{\vp_1}^* M_{\vp_2}$ is a partial isometry, we immediately conclude that $\text{ran} (M_{\vp_2}^* M_{\vp_1})$ and $\text{ran} (M_{\vp_1}^* M_{\vp_2})$ are closed subspaces. Hence
\[
\begin{split}
\text{ran}(M_{\vp_2}^* M_{\vp_1}) & = (\ker (M_{\vp_1}^*M_{\vp_2}))^\perp
\\
& = (\ker M_{\tilde{\vp}_1}^*)^\perp
\\
& = \text{ran} M_{\tilde{\vp}_1}
\\
& = \tilde{\vp}_1 H^2(\D^n).
\end{split}
\]
Similarly
\[
\begin{split}
\text{ran} (M_{{\vp}_1}^* M_{{\vp}_2}) & = \text{ran} (M_{\tilde{\vp}_1}^* M_{\tilde{\vp}_2})
\\
& = \text{ran} (M_{\tilde{\vp}_1}^* M_{\tilde{\vp}_2} M_{\tilde{\vp}_2}^* M_{\tilde{\vp}_1})
\\
& = \text{ran} (M_{\tilde{\vp}_2} M_{\tilde{\vp}_2}^*)
\\
& = \tilde{\vp}_2 H^2(\D^n).
\end{split}
\]
Consequently, the initial and the final spaces of the partial isometry $M_{\vp_1}^*M_{\vp_2}$ are $\tilde{\vp}_1 H^2(\D^n)$ and $\tilde{\vp}_2 H^2(\D^n)$, respectively. On the other hand
\[
\vp_1 H^2(\D^n) = M_{\vp_2}^* M_{\vp_1}(\vp_2 H^2(\D^n)),
\]
and
\[
\vp_2 H^2(\D^n) = M_{\vp_1}^* M_{\vp_2}(\vp_1 H^2(\D^n)),
\]
imply that
\[
\vp_1 H^2(\D^n)\subseteq \text{ran}(M_{\vp_2}^* M_{\vp_1}),
\]
and
\[
\vp_2H^2(\D^n)\subseteq \text{ran}(M_{\vp_1}^* M_{\vp_2}).
\]
Therefore
\[
\vp_1 H^2(\D^n) \subseteq \tilde{\vp}_1 H^2(\D^n),
\]
and
\[
\vp_2 H^2(\D^n) \subseteq \tilde{\vp}_2 H^2(\D^n).
\]
In particular, there exist inner functions $\psi_1$ and $\psi_2$ in $H^{\infty}(\D^n)$ such that
\[
\vp_j = \tilde{\vp}_j \psi_j \qquad (j=1,2).
\]
Observe that by multiplying by $M_{\tilde{\vp}_2}^*$ from the left and $M_{\tilde{\vp}_1}$ from the right side of $M_{\vp_1}^*M_{\vp_2} = M_{\tilde{\vp}_1}^*M_{\tilde{\vp}_2}$ we get
\[
M_{\tilde{\vp}_2}^*M_{\vp_1}^*M_{\vp_2}M_{\tilde{\vp}_1}=I,
\]
that is, $M_{\vp_2}M_{\tilde{\vp}_1} = M_{\tilde{\vp}_2}M_{\vp_1}$, and hence
\[
\tilde{\vp}_1 \vp_2 = \tilde{\vp}_2 \vp_1.
\]
Then, in view of $\vp_j = \tilde{\vp}_j \psi_j$, $j=1,2$, we have
\[
\tilde{\vp}_1 \tilde{\vp}_2 \psi_2 = \tilde{\vp}_2 \tilde{\vp}_1 \psi_1,
\]
which yields that $\psi_1 = \psi_2$.

\NI We now turn to the converse. Suppose there exist inner functions $\tilde{\vp}_1, \tilde{\vp}_2, \psi \in H^\infty(\D^n)$ such that $\tilde{\vp}_1$ and $\tilde{\vp}_2$ are separated and $\vp_1 = \tilde{\vp}_1 \psi$ and $\vp_2 = \tilde{\vp}_2 \psi$. Then, in view of $P_{\cls_i} = M_{\vp_i} M_{\vp_i}^*$, $i=1,2$, and the identity \eqref{eqn: dc of TO} we compute
\[
\begin{split}
P_{\cls_1}P_{\cls_2} & = (M_{\tilde{\vp}_1} M_{\psi} M_{\tilde{\vp}_1}^* M_{\psi}^*) (M_{\tilde{\vp}_2} M_\psi M_{\tilde{\vp}_2}^* M_{\psi}^*)
\\
& = (M_{\tilde{\vp}_1} M_{\psi} M_{\tilde{\vp}_1}^*) (M_{\tilde{\vp}_2}M_{\tilde{\vp}_2}^*M_{\psi}^*)
\\
& = (M_{\tilde{\vp}_1} M_{\psi} M_{\tilde{\vp}_2}) (M_{\tilde{\vp}_1}^* M_{\tilde{\vp}_2}^*M_{\psi}^*)
\\
& = (M_{\tilde{\vp}_2} M_{\psi}  M_{\tilde{\vp}_2}^*) (M_{\tilde{\vp}_1} M_{\tilde{\vp}_1}^*M_{\psi}^*)
\\
& = (M_{\tilde{\vp}_2} M_{\psi}  M_{\tilde{\vp}_2}^* M_{\psi}^*) (M_{\psi} M_{\tilde{\vp}_1} M_{\tilde{\vp}_1}^*M_{\psi}^*)
\\
& = (M_{{\vp}_2} M_{{\vp}_2}^*) (M_{{\vp}_1} M_{{\vp}_1}^*).
\end{split}
\]
Therefore, $P_{\cls_1}P_{\cls_2} = P_{\cls_2}P_{\cls_1}$, which completes the proof of the theorem.
\end{proof}

It is particularly interesting to observe that in the preceding proof, we also show that if
\[
[P_{\vp_1H^2(\D^n)}, P_{\vp_2 H^2(\D^n)}] = 0,
\]
then
\[
P_{\vp_1H^2(\D^n)} P_{\vp_2 H^2(\D^n)} = M_{\psi} P_{\tilde{\vp}_1 \tilde{\vp}_2 H^2(\D^n)} M_{\psi}^*,
\]
or equivalently
\[
\vp_1H^2(\D^n) \cap \vp_2 H^2(\D^n) = \psi \tilde{\vp}_1 \tilde{\vp}_2 H^2(\D^n).
\]

\section{An answer to Douglas's question}\label{sec Douglas}

This section's goal is to answer the question of Douglas that was imposed in Problem \ref{Q: 3} (also see Bickel and Liaw \cite[page 104]{BL} for the origin of this problem): For which inner functions $\vp_1$ and $\vp_2$ in $H^{\infty}(\D^2)$ is the projection $P_{\clq_{\vp_1}} P_{\clq_{\vp_2}}$ finite rank?

We address this problem in a more general setting, that is, for any $n > 1$. In what follows, an invariant subspace of $H^2(\D^n)$ is one that is closed and invariant under $M_{z_i}$ for all $i=1, \ldots, n$, whereas a co-invariant subspace of $H^2(\D^n)$ means that the orthocomplement of that closed subspace is an invariant subspace of $H^2(\D^n)$.

We must recall a classical Ahern and Clark result (note the final corollary in \cite{AC}): Let $n > 1$ and let $\{f_i\}_{i=1}^m$ be a subset of $H^2(\D^n)$. Suppose $m < n$. Then the invariant subspace $\cls$ generated by $\{f_i\}_{i=1}^m$ is either all of $H^2(\D^n)$ or has infinite codimension.

\NI Recall that for an inner function $\vp \in H^\infty(\D^n)$, the model space $\clq_{\vp}$ is defined by (see \eqref{eqn: QM})
\[
\clq_{\vp} = H^2(\D^n)/\vp H^2(\D^n).
\]
The following is now easy:

\begin{cor}\label{cor inf dim QM}
Let $n > 1$ and let $\vp \in H^\infty(\D^n)$ be a nonconstant inner function. Then
\[
\text{dim} \clq_\vp = \infty.
\]
\end{cor}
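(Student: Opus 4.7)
The plan is to reduce the statement to a direct application of the Ahern--Clark result recalled just above the corollary. The target subspace $\vp H^2(\D^n)$ is precisely the joint $(M_{z_1},\ldots,M_{z_n})$-invariant subspace singly generated by the function $\vp \in H^2(\D^n)$: indeed, since $\vp$ is inner, $M_\vp$ is an isometry, so $\vp H^2(\D^n)$ is already closed and equals the closure of $\{p\vp : p \in \mathbb{C}[z_1,\ldots,z_n]\}$, which is the invariant subspace generated by $\{\vp\}$. Taking $m = 1$ in the Ahern--Clark dichotomy (which is allowed because $m = 1 < n$), we get that $\vp H^2(\D^n)$ is either all of $H^2(\D^n)$ or has infinite codimension.

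The remaining step is to rule out the first alternative, i.e., to show $\vp H^2(\D^n) \neq H^2(\D^n)$ whenever $\vp$ is a nonconstant inner function. The clean way to do this is to compute the norm of the projection of the constant function $1 \in H^2(\D^n)$ onto $\vp H^2(\D^n)$. Since $M_\vp^\ast k_w = \overline{\vp(w)}\, k_w$ for the reproducing kernel $k_w$ of $H^2(\D^n)$, and $1 = k_0$, one has $M_\vp^\ast 1 = \overline{\vp(0)}$, giving
\[
\|P_{\vp H^2(\D^n)} 1\|^2 = \|M_\vp M_\vp^\ast 1\|^2 = |\vp(0)|^2.
\]
As $\vp$ is inner, $\|\vp\|_\infty = 1$, and by the maximum modulus principle on $\D^n$, $|\vp(0)| = 1$ would force $\vp$ to be a unimodular constant. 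Since $\vp$ is assumed nonconstant, $|\vp(0)| < 1$, so $P_{\vp H^2(\D^n)} \neq I$, i.e., $\vp H^2(\D^n) \neq H^2(\D^n)$.

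Combining the two steps, the Ahern--Clark dichotomy forces $\vp H^2(\D^n)$ to have infinite codimension, which is exactly $\dim \clq_\vp = \infty$. I do not foresee a serious obstacle here; the whole content is packaged in the Ahern--Clark result, and the only mild check is the strict inequality $|\vp(0)| < 1$ for a nonconstant inner $\vp$, which is immediate from the maximum modulus principle.
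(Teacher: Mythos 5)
Your proposal is correct and follows essentially the same route as the paper: the paper simply observes that $\clq_\vp^\perp = \vp H^2(\D^n)$ is a cyclic (singly generated) invariant subspace and invokes the Ahern--Clark dichotomy with $m=1<n$. Your additional step ruling out $\vp H^2(\D^n) = H^2(\D^n)$ via $\|P_{\vp H^2(\D^n)}1\|^2 = |\vp(0)|^2 < 1$ is a correct filling-in of a detail the paper leaves implicit.
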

\begin{proof}
Observe that $\clq_\vp^\perp = \vp H^2(\D^n)$. In particular, $\clq_\vp^\perp$ is a cyclic invariant subspace of $H^2(\D^n)$. The result now follows from Ahern and Clark.
\end{proof}

We are now ready to face Problem \ref{Q: 3} for any $n > 1$:

\begin{thm}\label{thm Douglas}
Let $\vp_1$ and $\vp_2$ be inner functions in $H^{\infty}(\D^n)$, $n \geq 2$. Then the following hold:
\begin{enumerate}
\item If $n=2$, then $P_{\clq_{\vp_1}} P_{\clq_{\vp_2}}$ is a finite rank projection if and only if either one of the following conditions holds:
\begin{enumerate}
\item $\vp_1$ or $\vp_2$ is a constant function;
\item $\vp_1$ and $\vp_2$ are separated and finite Blaschke products.
\end{enumerate}
\item If $n > 2$, then it is impossible for $P_{\clq_{\vp_1}} P_{\clq_{\vp_2}}$ to be a finite rank projection unless $P_{\clq_{\vp_1}} P_{\clq_{\vp_2}} = 0$.
\end{enumerate}
\end{thm}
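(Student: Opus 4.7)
The plan is to reduce the finite rank question to the commuting case via Theorem \ref{thm n var proj}, then to analyze the resulting separated factorization through a tensor product decomposition on the polydisc. First I note that if $P_{\clq_{\vp_1}} P_{\clq_{\vp_2}}$ is a projection, then by the standard fact recalled in Section \ref{sec: 1} it equals $P_{\clq_{\vp_1} \cap \clq_{\vp_2}}$, so its rank equals $\dim(\clq_{\vp_1} \cap \clq_{\vp_2})$, and the two projections commute; since $P_{\clq_{\vp_j}} = I - P_{\vp_j H^2(\D^n)}$, we also have $[P_{\vp_1 H^2(\D^n)}, P_{\vp_2 H^2(\D^n)}] = 0$. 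Applying Theorem \ref{thm n var proj} produces inner functions $\tilde{\vp}_1, \tilde{\vp}_2, \psi \in H^\infty(\D^n)$ with $\tilde{\vp}_1, \tilde{\vp}_2$ separated and $\vp_j = \tilde{\vp}_j \psi$ for $j=1,2$.

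Second, I would eliminate $\psi$. Since $\vp_j H^2(\D^n) = \tilde{\vp}_j \psi H^2(\D^n) \subseteq \psi H^2(\D^n)$, we get $\clq_\psi \subseteq \clq_{\vp_j}$ for each $j$, and therefore $\clq_\psi \subseteq \clq_{\vp_1} \cap \clq_{\vp_2}$. If the latter is finite-dimensional, so is $\clq_\psi$; because $n > 1$, Corollary \ref{cor inf dim QM} then forces $\psi$ to be a unimodular constant. We are thereby reduced to the case in which $\vp_1$ and $\vp_2$ are themselves separated inner functions.

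Third, I would apply a tensor product decomposition. Let $I$, $J$ be the (disjoint) sets of variables on which $\vp_1$, $\vp_2$ respectively depend, and set $K = \{1,\ldots,n\} \setminus (I \cup J)$. Under the natural isomorphism $H^2(\D^n) \cong H^2(\D^{|I|}) \otimes H^2(\D^{|J|}) \otimes H^2(\D^{|K|})$, the product structure of $\vp_j H^2(\D^n)$ yields
\[
\clq_{\vp_1} \cap \clq_{\vp_2} = \clq_{\vp_1}(\D^{|I|}) \otimes \clq_{\vp_2}(\D^{|J|}) \otimes H^2(\D^{|K|}),
\]
where $\clq_{\vp_1}(\D^{|I|})$ and $\clq_{\vp_2}(\D^{|J|})$ denote the model spaces of $\vp_1$ and $\vp_2$ viewed on their respective smaller polydiscs. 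Finite-dimensionality of the left-hand side forces $K = \emptyset$ and forces each of the small model spaces to be finite-dimensional; a second appeal to Corollary \ref{cor inf dim QM} applied inside $\D^{|I|}$ and $\D^{|J|}$ then forces $|I| = |J| = 1$. Hence $n = 2$ and each $\vp_j$ is a one-variable finite Blaschke product. For $n > 2$ this chain of implications has no nontrivial solution, so any finite rank value of $P_{\clq_{\vp_1}} P_{\clq_{\vp_2}}$ must be the zero projection; this proves (2) and the forward direction of (1).

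The reverse direction of (1) is a routine verification: if some $\vp_j$ is a (unimodular) constant then $\clq_{\vp_j} = \{0\}$ and the product vanishes, while if $\vp_1, \vp_2$ are separated finite Blaschke products on $\D^2$ the tensor identity above gives $\dim(\clq_{\vp_1} \cap \clq_{\vp_2}) = \deg \vp_1 \cdot \deg \vp_2 < \infty$, with commutativity of the two projections falling out of Theorem \ref{thm n var proj} applied with $\psi \equiv 1$. The principal obstacle is really Theorem \ref{thm n var proj} itself, which converts the algebraic commutativity into a concrete multiplicative factorization; once that is granted, the present theorem becomes a careful piece of dimensional bookkeeping, bootstrapped by the Ahern--Clark corollary to separate the $n = 2$ situation from the $n > 2$ situation.
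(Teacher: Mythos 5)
Your argument is correct, and its skeleton (reduce finite rank to commutativity, invoke Theorem \ref{thm n var proj}, kill $\psi$ via Corollary \ref{cor inf dim QM}, then compute $\clq_{\vp_1}\cap\clq_{\vp_2}$ as a tensor product) coincides with the paper's proof up to and including the $n=2$ case; the intersection identity you assert is exactly the paper's computation $\clq_{\vp_1}\cap\clq_{\vp_2}=\clq_{\sigma_1}\otimes\clq_{\sigma_2}$, just carried out with the extra factor $H^2(\D^{|K|})$ for unused variables (a one-line justification via the commuting projections $P_{\clq_{\sigma_1}}\otimes I\otimes I$ and $I\otimes P_{\clq_{\sigma_2}}\otimes I$ would make it airtight, but this is at the paper's own level of detail). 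Where you genuinely diverge is the case $n>2$: the paper does not use the tensor decomposition there at all, but instead applies the Ahern--Clark result a second time, directly to the joint invariant subspace $(\clq_{\vp_1}\cap\clq_{\vp_2})^{\perp}=\overline{\operatorname{span}}\{\vp_1H^2(\D^n),\vp_2H^2(\D^n)\}$, which is generated by two functions with $2<n$, concluding that a finite-rank intersection must be $\{0\}$ and hence $P_{\clq_{\vp_1}}P_{\clq_{\vp_2}}=0$. You instead run the separated tensor decomposition for all $n$ and reuse Corollary \ref{cor inf dim QM} inside $\D^{|I|}$ and $\D^{|J|}$ to force $|I|=|J|=1$ and $K=\emptyset$, i.e.\ $n=2$. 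Both routes ultimately rest on Ahern--Clark (your second appeal is filtered through Corollary \ref{cor inf dim QM}), but they buy slightly different things: the paper's version isolates a clean statement about two-generator invariant subspaces in $n\geq 3$ variables without needing the separated structure again, while yours is more uniform across $n$ and in fact shows a bit more, namely that for $n>2$ a finite-rank projection $P_{\clq_{\vp_1}}P_{\clq_{\vp_2}}$ can only occur when one symbol is constant (so the value $0$ is attained only trivially), since for nonconstant separated symbols the intersection $\clq_{\sigma_1}\otimes\clq_{\sigma_2}\otimes H^2(\D^{|K|})$ is nonzero and infinite dimensional.
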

\begin{proof}
We know that
\[
P_{\clq_{\vp_i}} = I - P_{\vp_i H^2(\D^n)} \qquad (i=1,2).
\]
It is now easy to see that $P_{\clq_{\vp_1}}P_{\clq_{\vp_2}}$ is a projection if and only if $P_{\vp_1H^2(\D^n)}P_{\vp_2H^2(\D^n)}$ is a projection, or equivalently (see the second paragraph in Section \ref{sec: 1})
\[
[P_{\vp_1H^2(\D^n)}, P_{\vp_2H^2(\D^n)}] = 0.
\]
This identity is automatically true if either $\vp_1$ or $\vp_2$ is a constant function (see the paragraph following Theorem \ref{thm intro inner}). Therefore, we assume that $\vp_1$ and $\vp_2$ are non-constant functions. Again, $P_{\clq_{\vp_1}}P_{\clq_{\vp_2}}$ is a projection if and only if
\[
[P_{\vp_1H^2(\D^n)}, P_{\vp_2H^2(\D^n)}] = 0.
\]
Equivalently, by Theorem \ref{thm n var proj}, there exist inner functions $\tilde{\vp}_1, \tilde{\vp}_2,\psi \in H^{\infty}(\D^n)$ such that
$\tilde{\vp}_1$ and $\tilde{\vp}_2$ are separated and
\[
\vp_1=\tilde{\vp}_1 \psi \text{ and } \vp_2 = \tilde{\vp}_2 \psi.
\]
Let us pause for a moment and simplify things. If the foregoing identities are true, then we see that
\[
I - M_{\vp_i} M_{\vp_i}^* = (I - M_\psi M_{\psi}^*) \oplus M_{\psi} (I - M_{\tilde{\vp_i}} M_{\tilde{\vp_i}}^*) M_{\psi}^*,
\]
for all $i=1,2$. Since $M_{\vp_i}$ and $M_{\tilde{\vp_i}}$, $i=1,2$, and $M_\psi$ are isometric operators acting on $H^2(\D^n)$, it follows that
\[
\text{ran} (I - M_{\vp_i} M_{\vp_i}^*) = \clq_{\vp_i} \text{ and } \text{ran}(I - M_\psi M_{\psi}^*) = \clq_{\psi},
\]
and
\[
\text{ran} (M_\psi(I - M_{\tilde{\vp_i}} M_{\tilde{\vp_i}}^*) M_{\psi}^*) = \psi \clq_{\tilde{\vp_i}},
\]
which yields
\[
\clq_{\vp_i} = \clq_{\psi} \oplus \psi \clq_{\tilde{\vp_i}},
\]
for all $i=1,2$. Clearly
\[
\clq_{\vp_1} \cap \clq_{\vp_2} = \clq_{\psi} \oplus \psi (\clq_{\tilde{\vp_1}} \cap \clq_{\tilde{\vp_2}}).
\]
After this simplification, now we return to the main body of the proof and define the bounded linear operator $P$ on $H^2(\D^n)$ by
\[
P = P_{\clq_{\vp_1}}P_{\clq_{\vp_2}}.
\]
If we know that $P$ is a projection, then
\[
\text{ran} P = \clq_{\vp_1}\cap \clq_{\vp_2},
\]
and consequently
\begin{equation}\label{eqn ran p decom}
\text{ran} P = \clq_{\psi} \oplus \psi (\clq_{\tilde{\vp_1}} \cap \clq_{\tilde{\vp_2}}).
\end{equation}
Therefore, $P$ is a finite rank projection if and only if \eqref{eqn ran p decom} holds and
\[
\text{dim}(\text{ran} P ) < \infty;
\]
or equivalently, \eqref{eqn ran p decom} holds and
\[
\text{dim}(\clq_{\psi}) < \infty,
\]
and (recall that $\psi$ is inner and hence $M_\psi$ is an isometry)
\begin{equation}\label{eqn dim of Q}
\text{dim}(\clq_{\tilde{\vp_1}} \cap \clq_{\tilde{\vp_2}}) < \infty.
\end{equation}
We now divide the remaining proof into the two cases of $n=2$ and $n>2$. However, the following two remarks must be made before we discuss these cases separately: According to Corollary \ref{cor inf dim QM}, the assumption $\text{dim}(\clq_{\psi}) < \infty$ compels the inner function $\psi$ to be a unimodular constant, that is, there exists $\alpha \in \T$ such that
\[
\psi \equiv \alpha,
\]
which also forces
\[
\clq_{\psi} = \{0\}.
\]
Secondly, recall that in the case of $n=1$, a co-invariant subspace $\clq_{\vp}\subseteq H^2(\D)$ is finite-dimensional if and only if $\vp$ is a finite Blaschke product, and, in this case
\[
\text{rank} P_{\clq_{\vp}} = \# \{z \in \D: \vp(z) = 0\},
\]
counting multiplicities.

\NI \textsf{Case 1:} Suppose $n=2$. Then $P$ is a finite rank projection if and only if there exist separated inner functions $\tilde{\vp}_1, \tilde{\vp}_2 \in H^\infty(\D^2)$ and a scalar $\alpha \in \T$ such that
\[
\vp_1 = \alpha \tilde{\vp}_1 \text{ and } \vp_2 = \alpha \tilde{\vp}_2,
\]
and \eqref{eqn dim of Q} holds. In view of this, we may state the following (as one may simply replace $\vp_i$ by $\alpha \tilde{\vp}_i$, $i=1,2$): $P$ is a finite rank projection if and only if $\vp_1$ depends only on the first variable and $\vp_2$ depends only on the second, and
\[
\text{dim}(\clq_{{\vp_1}} \cap \clq_{{\vp_2}}) < \infty.
\]
Now we analyze the co-invariant subspace $\clq_{{\vp_1}} \cap \clq_{{\vp_2}}$ under the assumption that $\vp_1$ depends only on $z_1$ and $\vp_2$ depends only on $z_2$. There exist functions $\sigma_1, \sigma_2 \in H^\infty(\D)$ such that
\[
\vp_1(z_1, z_2) = \sigma_1(z_1) \text{ and } \vp_2(z_1, z_2) = \sigma_2(z_2),
\]
for all $(z_1, z_2) \in \D^2$. We avoid distinguishing $H^2(\D^2)$ and $H^2(\D) \otimes H^2(\D)$ in the following because of the inherent unitary equivalence. For instance, $M_{\vp_1} = M_{\sigma_1} \otimes I_{H^2(\D)}$. We compute
\[
\begin{split}
\clq_{\vp_1} & = (I_{H^2(\D^2)} - M_{\vp_1} M_{\vp_1}^*) H^2(\D^2)
\\
& = ((I_{H^2(\D)} - M_{\sigma_1} M_{\sigma_1}^*) \otimes I_{H^2(\D)}) (H^2(\D) \otimes H^2(\D))
\\
& = ((I_{H^2(\D)} - M_{\sigma_1} M_{\sigma_1}^*) H^2(\D)) \otimes H^2(\D)
\\
& = \clq_{\sigma_1} \otimes H^2(\D),
\end{split}
\]
and similarly
\[
\clq_{\vp_2} = H^2(\D) \otimes \clq_{\sigma_2}.
\]
Therefore, we conclude that
\[
\clq_{{\vp_1}} \cap \clq_{{\vp_2}} = \clq_{\sigma_1} \otimes \clq_{\sigma_2}.
\]
Then
\[
\text{dim}(\clq_{{\vp_1}} \cap \clq_{{\vp_2}}) = \text{dim} \clq_{\sigma_1} \times \text{dim} \clq_{\sigma_2},
\]
and hence
\[
\text{dim}(\clq_{{\vp_1}} \cap \clq_{{\vp_2}})< \infty,
\]
if and only if
\[
\text{dim} \clq_{\sigma_1}, \text{dim} \clq_{\sigma_2} < \infty,
\]
or equivalently, $\sigma_1$ and $\sigma_2$ are finite Blaschke products. In summary, $P_{\clq_{\vp_1}} P_{\clq_{\vp_2}}$ is a finite rank projection if and only if $\vp_1$ and $\vp_2$ are separated and finite Blaschke products.

\NI \textsf{Case 2:} Suppose $n > 2$. As in Case 1, it again follows that $\psi$ is a unimodular constant, and hence: $P$ is a finite rank projection if and only if $\vp_1$ and $\vp_2$ are separated, and
\[
\text{dim}(\clq_{{\vp_1}} \cap \clq_{{\vp_2}}) < \infty.
\]
On the other hand
\[
(\clq_{{\vp_1}} \cap \clq_{{\vp_2}})^\perp = \overline{\text{span}}\{\vp_1 H^2(\D^n), \vp_2 H^2(\D^n)\},
\]
implies that the invariant subspace $(\clq_{{\vp_1}} \cap \clq_{{\vp_2}})^\perp$ is generated by at most two inner functions (namely $\vp_1$ and $\vp_2$). Since $n \geq 3$, by Ahern and Clark, it follows that
\[
\overline{\text{span}}\{\vp_1 H^2(\D^n), \vp_2 H^2(\D^n)\} = H^2(\D^n).
\]
Therefore
\[
\clq_{{\vp_1}} \cap \clq_{{\vp_2}} = \{0\}.
\]
Since $[P_{\clq_{\vp_1}}, P_{\clq_{\vp_2}} ] = 0$, it follows that $P_{\clq_{\vp_1}} P_{\clq_{\vp_2}} = 0$. Therefore, if $P_{\clq_{\vp_1}} P_{\clq_{\vp_2}} \neq 0$, then $P_{\clq_{\vp_1}} P_{\clq_{\vp_2}}$ cannot be a finite rank projection.
\end{proof}

If $f$ is a complex-valued function on $\Omega \subseteq \mathbb{C}^n$, we denote by $\clz(f)$ the zero set of $f$. From the above proof, it follows that if $P_{\clq_{\vp_1}} P_{\clq_{\vp_2}}$ is a finite rank projection (of course, on $H^2(\D^2)$), then
\[
\text{rank} (P_{\clq_{\vp_1}} P_{\clq_{\vp_2}}) = \# \clz(\vp_1) \times \# \clz(\vp_2).
\]

The question posed by Douglas also makes sense in one variable situation. We claim the following: Suppose $\vp_1$ and $\vp_2$ be nonconstant inner functions in $H^\infty(\D)$. Then $P_{\clq_{\vp_1}} P_{\clq_{\vp_2}}$ is a finite rank projection if and only if either $\vp_1$ is a finite Blaschke product and divides $\vp_2$ or $\vp_2$ is a finite Blaschke product and divides $\vp_1$.

\NI The proof proceeds as follows: As in the preceding theorem, $P_{\clq_{\vp_1}} P_{\clq_{\vp_2}}$ is a projection if and only if $[P_{\vp_1 H^2(\D)}, P_{\vp_2 H^2(\D)}] = 0$, which, by Theorem \ref{thm 1 var proj}, is equivalent to the condition that $\vp_1$ divides $\vp_2$ or $\vp_2$ divides $\vp_1$. Suppose $\vp_1$ divides $\vp_2$, that is, $\vp_2 = \psi \vp_1$ for some inner function $\psi \in H^\infty(\D)$. This is equivalent to the condition that
\[
\clq_{\vp_1} \subseteq \clq_{\vp_2}.
\]
In this case, we have $\clq_{\vp_1} \cap \clq_{\vp_2} = \clq_{\vp_1}$, which implies that
\[
P_{\clq_{\vp_1}} P_{\clq_{\vp_2}} = P_{\clq_{\vp_1}},
\]
and consequently, $P_{\clq_{\vp_1}} P_{\clq_{\vp_2}}$ is a finite rank operator if and only if $\vp_1$ is a finite Blaschke product. If $\vp_2$ divides $\vp_1$, then the proof follows similarly. This completes the proof of the claim.

\newsection{Truncated Toeplitz operators}\label{sec TTO}

In this section we characterize partially isometric truncated Toeplitz operators with inner symbols. Throughout what follows ${{\varphi_1}}$ and ${\varphi_2}$ will be nonconstant inner functions in $H^{\infty}(\D^n)$. Recall that the truncated Toeplitz operator on $\clq_{\varphi_1}$ with symbol ${{\varphi_2}}$ is the bounded linear operator $T_{{{\varphi_2}}}^{{\varphi_1}}$ on $\clq_{\varphi_1}$, where
\[
T_{{{\varphi_2}}}^{{\varphi_1}} = P_{\clq_{\varphi_1}} M_{{{\varphi_2}}}|_{\clq_{\varphi_1}}.
\]
The following lemma will turn out to be very useful in what follows.

\begin{lem}\label{lem TTO}
Let ${{\varphi}}$ and ${\varphi_2}$ be nonconstant inner functions in $H^{\infty}(\D^n)$, $n \geq 1$. Suppose $T_{{{\varphi_2}}}^{{\varphi_1}}$ is a partial isometry. The following hold:
\begin{enumerate}
\item $M_{{{\varphi_2}}}^* {\varphi_1} \clq_{{\varphi_2}} \subseteq \clq_{\varphi_1}$.
\item $(\ker T_{{{\varphi_2}}}^{{\varphi_1}})^\perp = \clq_{\varphi_1} \ominus M_{{{\varphi_2}}}^* {\varphi_1} \clq_{{\varphi_2}}$.
\item $\ker T_{{{\varphi_2}}}^{{\varphi_1}} = \overline{M_{{{\varphi_2}}}^* {\varphi_1} \clq_{{\varphi_2}}}$.
\end{enumerate}
\end{lem}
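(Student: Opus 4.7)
The plan is to handle the three parts in order, noting that (1) is actually formal and only (2) and (3) invoke the partial isometry hypothesis.

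For (1), the inclusion $M_{\varphi_2}^*(\varphi_1 \clq_{\varphi_2}) \subseteq \clq_{\varphi_1}$ is purely algebraic: for $g \in \clq_{\varphi_2}$ and $k \in H^2(\D^n)$, one computes
\[
\langle M_{\varphi_2}^*(\varphi_1 g),\, \varphi_1 k\rangle = \langle \varphi_1 g,\, \varphi_1 \varphi_2 k\rangle = \langle g,\, \varphi_2 k\rangle = 0,
\]
using $g \perp \varphi_2 H^2(\D^n)$. Hence $M_{\varphi_2}^*(\varphi_1 g) \perp \varphi_1 H^2(\D^n)$, which gives (1).

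For (2), the plan is first to extract a workable description of $(\ker T^{\varphi_1}_{\varphi_2})^\perp$ from the partial isometry hypothesis, and then match it to $\clq_{\varphi_1} \ominus M_{\varphi_2}^*(\varphi_1 \clq_{\varphi_2})$. For every $f \in \clq_{\varphi_1}$, decomposing $\varphi_2 f \in H^2(\D^n)$ along $H^2(\D^n) = \clq_{\varphi_1} \oplus \varphi_1 H^2(\D^n)$ gives
\[
\varphi_2 f = T^{\varphi_1}_{\varphi_2} f + \varphi_1 M_{\varphi_1}^*(\varphi_2 f),
\]
and since $\varphi_1, \varphi_2$ are inner the Pythagorean identity yields
\[
\|f\|^2 = \|T^{\varphi_1}_{\varphi_2} f\|^2 + \|M_{\varphi_1}^*(\varphi_2 f)\|^2.
\]
The partial isometry hypothesis says $T^{\varphi_1}_{\varphi_2}$ is isometric precisely on $(\ker T^{\varphi_1}_{\varphi_2})^\perp$, and combining this with the above identity characterises
\[
(\ker T^{\varphi_1}_{\varphi_2})^\perp = \{f \in \clq_{\varphi_1}:\ M_{\varphi_1}^*(\varphi_2 f) = 0\} = \{f \in \clq_{\varphi_1}:\ \varphi_2 f \in \clq_{\varphi_1}\}.
\]

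Next, I would show that $\{f \in \clq_{\varphi_1}:\ \varphi_2 f \in \clq_{\varphi_1}\}$ coincides with $\clq_{\varphi_1} \ominus M_{\varphi_2}^*(\varphi_1 \clq_{\varphi_2})$. One direction is immediate from $\langle f, M_{\varphi_2}^*(\varphi_1 g)\rangle = \langle \varphi_2 f, \varphi_1 g\rangle$ together with $\varphi_2 f \perp \varphi_1 H^2(\D^n) \supseteq \varphi_1 \clq_{\varphi_2}$. For the converse, I would split an arbitrary $k \in H^2(\D^n)$ as $k = g + \varphi_2 h$ with $g \in \clq_{\varphi_2}$ and $h \in H^2(\D^n)$, so that
\[
\langle \varphi_2 f, \varphi_1 k\rangle = \langle \varphi_2 f, \varphi_1 g\rangle + \langle f, \varphi_1 h\rangle;
\]
the first term vanishes by the hypothesis $f \perp M_{\varphi_2}^*(\varphi_1 \clq_{\varphi_2})$, and the second by $f \in \clq_{\varphi_1}$. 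This promotes $\varphi_2 f \perp \varphi_1 \clq_{\varphi_2}$ to $\varphi_2 f \perp \varphi_1 H^2(\D^n)$, giving (2). Part (3) then follows by taking the orthogonal complement within $\clq_{\varphi_1}$, which converts $(\ker T^{\varphi_1}_{\varphi_2})^\perp = \clq_{\varphi_1} \ominus M_{\varphi_2}^*(\varphi_1 \clq_{\varphi_2})$ into $\ker T^{\varphi_1}_{\varphi_2} = \overline{M_{\varphi_2}^*(\varphi_1 \clq_{\varphi_2})}$.

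The main obstacle is precisely this last bilinear upgrade from orthogonality on the proper subspace $\varphi_1 \clq_{\varphi_2}$ to full orthogonality on $\varphi_1 H^2(\D^n)$; it crucially relies on both the Beurling-style splitting $H^2(\D^n) = \clq_{\varphi_2} \oplus \varphi_2 H^2(\D^n)$ supplied by the inner function $\varphi_2$ and the a priori assumption that $f$ already lies in the model space $\clq_{\varphi_1}$.
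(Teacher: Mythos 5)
Your proposal is correct and follows essentially the same route as the paper: the same algebraic computation for (1), the same characterization of $(\ker T^{\varphi_1}_{\varphi_2})^\perp$ via the partial-isometry/norm-equality criterion (your Pythagorean identity is just the paper's statement $\|P_{\clq_{\varphi_1}}(\varphi_2 f)\| = \|\varphi_2 f\|$ made explicit), and the same use of the splitting $H^2(\D^n) = \clq_{\varphi_2} \oplus \varphi_2 H^2(\D^n)$, which you apply to the test vector while the paper applies it to decompose $\overline{M_{\varphi_2}^*\varphi_1 H^2(\D^n)}$. This difference is purely organizational, so no further comparison is needed.
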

\begin{proof}
Note that $\clq_{{\varphi_2}} = \ker M_{{\varphi_2}}^*$. For each $f \in \clq_{{\varphi_2}}$ and $g \in H^2(\D^n)$, we have
\[
\begin{split}
\langle M_{{{\varphi_2}}}^* {\varphi_1} f, {\varphi_1} g \rangle & = \langle M_{{\varphi_1}}^* M_{{{\varphi_2}}}^* {\varphi_1} f, g \rangle
\\
& = \langle M_{{{\varphi_2}}}^* f, g \rangle
\\
& = 0,
\end{split}
\]
which yields (1). For (2), we observe that $f \in (\ker T_{{{\varphi_2}}}^{{\varphi_1}})^\perp$ if and only if $\|T_{{{\varphi_2}}}^{{\varphi_1}} f\| = \|f\|$. We have on one hand $\|f\| = \|{{\varphi_2}} f\|$ (since ${{\varphi_2}}$ is inner), and on the other hand $T_{{{\varphi_2}}}^{{\varphi_1}} f = P_{\clq_{\varphi_1}} ({{\varphi_2}} f)$ for all $f \in \clq_{\varphi_1}$. Therefore, $f \in (\ker T_{{{\varphi_2}}}^{{\varphi_1}})^\perp$ if and only if
\[
\|P_{\clq_{\varphi_1}} ({{\varphi_2}} f)\| = \|{{\varphi_2}} f\|,
\]
or equivalently
\[
{{\varphi_2}} f \in \clq_{\varphi_1} = ({\varphi_1} H^2(\D^n))^\perp.
\]
Therefore
\[
(\ker T_{{{\varphi_2}}}^{{\varphi_1}})^\perp = \{f \in \clq_{\varphi_1}: f \perp M_{{{\varphi_2}}}^* {\varphi_1} H^2(\D^n)\}.
\]
Note that $M_{{{\varphi_2}}}^* {\varphi_1} {{\varphi_2}} H^2(\D^n) = {\varphi_1} H^2(\D^n)$, and $M_{{{\varphi_2}}}^* {\varphi_1} {{\varphi_2}} H^2(\D^n) \perp M_{{{\varphi_2}}}^* {\varphi_1} \clq_{{\varphi_2}}$. Indeed, to prove the latter claim, for each $f \in H^2(\D^n)$ and $g \in \clq_{{\varphi_2}}$, we compute
\[
\begin{split}
\langle M_{{{\varphi_2}}}^* {\varphi_1} {{\varphi_2}} f, M_{{{\varphi_2}}}^* {\varphi_1} g \rangle & = \langle {\varphi_1} f, M_{{{\varphi_2}}}^* {\varphi_1} g \rangle
\\
& = \langle M_{{{\varphi_2}}} f, g \rangle
\\
& = 0.
\end{split}
\]
This proves the claim. Therefore, writing $H^2(\D^n) = {{\varphi_2}} H^2(\D^n) \oplus \clq_{{\varphi_2}}$ we find
\[
\overline{M_{{{\varphi_2}}}^* {\varphi_1} H^2(\D^n)} = {\varphi_1} H^2(\D^n) \oplus \overline{M_{{{\varphi_2}}}^* {\varphi_1} \clq_{{\varphi_2}}}.
\]
Now $f \in \clq_{\varphi_1}$ automatically implies that $f \perp {\varphi_1} H^2(\D^n)$, and hence
\[
(\ker T_{{{\varphi_2}}}^{{\varphi_1}})^\perp = \clq_{\varphi_1} \ominus M_{{{\varphi_2}}}^* {\varphi_1} \clq_{{\varphi_2}},
\]
which completes the proof of part (2). Finally, (3) follows from both (1) and (2).
\end{proof}

We take a step towards the main
theorem of this section:

\begin{prop}\label{prop: TTO}
Let ${{\varphi_1}}$ and ${\varphi_2}$ be nonconstant inner functions in $H^{\infty}(\D^n)$. If $T_{{{\varphi_2}}}^{{\varphi_1}}$ is a partial isometry, then
\[
[P_{{\varphi_1} H^2(\D^n)}, P_{{{\varphi_2}} H^2(\D^n)}] = 0.
\]
\end{prop}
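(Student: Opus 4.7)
The plan is to use Lemma \ref{lem TTO}(3) to show that the closed subspace $\vp_1 H^2(\D^n)$ is invariant under $P_{\vp_2 H^2(\D^n)}$. Once that is done, the commutativity of $P_{\vp_1 H^2(\D^n)}$ and $P_{\vp_2 H^2(\D^n)}$ is automatic, because a closed subspace is invariant under a self-adjoint projection if and only if that projection commutes with the projection onto the subspace.

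The first step I would take is to record the intrinsic description
\[
\ker T^{\vp_1}_{\vp_2} = \{f \in \clq_{\vp_1} : \vp_2 f \in \vp_1 H^2(\D^n)\},
\]
which is immediate from $T^{\vp_1}_{\vp_2} f = P_{\clq_{\vp_1}}(\vp_2 f)$. Combining this with Lemma \ref{lem TTO}(3) --- which uses the partial isometry hypothesis to identify $\ker T^{\vp_1}_{\vp_2}$ with $\overline{M_{\vp_2}^* \vp_1 \clq_{\vp_2}}$ --- I would conclude that for every $g \in \clq_{\vp_2}$ the element $M_{\vp_2}^* \vp_1 g$ lies in $\ker T^{\vp_1}_{\vp_2}$, and consequently
\[
P_{\vp_2 H^2(\D^n)}(\vp_1 g) = M_{\vp_2} M_{\vp_2}^* \vp_1 g = \vp_2 \bigl(M_{\vp_2}^* \vp_1 g\bigr) \in \vp_1 H^2(\D^n).
\]

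For the complementary piece, for any $h \in H^2(\D^n)$ the element $\vp_1 \vp_2 h$ already belongs to $\vp_2 H^2(\D^n)$, so $P_{\vp_2 H^2(\D^n)}(\vp_1 \vp_2 h) = \vp_1 \vp_2 h \in \vp_1 H^2(\D^n)$. Transporting the orthogonal decomposition $H^2(\D^n) = \clq_{\vp_2} \oplus \vp_2 H^2(\D^n)$ through the isometry $M_{\vp_1}$ to $\vp_1 H^2(\D^n) = \vp_1 \clq_{\vp_2} \oplus \vp_1 \vp_2 H^2(\D^n)$ and using linearity assembles the two observations into $P_{\vp_2 H^2(\D^n)}(\vp_1 H^2(\D^n)) \subseteq \vp_1 H^2(\D^n)$, which is precisely the invariance claim.

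The partial isometry hypothesis enters in exactly one place --- the appeal to Lemma \ref{lem TTO}(3). Everything else is geometric manipulation that uses only selfadjointness of the projections and the fact that multiplication by an inner function is an isometry. I expect the main obstacle to be recognising that the kernel identification in Lemma \ref{lem TTO}(3), once matched against the intrinsic description of $\ker T^{\vp_1}_{\vp_2}$, immediately translates into a subspace-invariance statement; thereafter the argument is essentially mechanical.
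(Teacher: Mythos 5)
Your proposal is correct and follows essentially the same route as the paper: both rest on the kernel description $\ker T^{\vp_1}_{\vp_2}=\{f\in\clq_{\vp_1}:\vp_2 f\in\vp_1 H^2(\D^n)\}$ combined with Lemma \ref{lem TTO}(3) (giving $M_{\vp_2}^*\vp_1\clq_{\vp_2}\subseteq\ker T^{\vp_1}_{\vp_2}$), and both check the two pieces of $\vp_1 H^2(\D^n)=\vp_1\clq_{\vp_2}\oplus\vp_1\vp_2 H^2(\D^n)$ separately. The only difference is cosmetic: you conclude via invariance of $\vp_1 H^2(\D^n)$ under $P_{\vp_2 H^2(\D^n)}$ and the standard criterion for commuting projections, while the paper records the same information as the operator identity $P_{\vp_1 H^2(\D^n)}P_{\vp_2 H^2(\D^n)}M_{\vp_1}=P_{\vp_2 H^2(\D^n)}P_{\vp_1 H^2(\D^n)}M_{\vp_1}$ and multiplies by $M_{\vp_1}^*$.
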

\begin{proof}
Suppose $f \in \clq_{\varphi_1}$. Then $T_{{{\varphi_2}}}^{{\varphi_1}} f = 0$ if and only if
\[
P_{\clq_{\varphi_1}} {{\varphi_2}} f = 0,
\]
or equivalently, $(I - M_{\varphi_1} M_{\varphi_1}^*) {{\varphi_2}} f = 0$. So we conclude that
\[
\ker T_{{{\varphi_2}}}^{{\varphi_1}} = \{f \in \clq_{\varphi_1}: M_{\varphi_1} M_{\varphi_1}^* {{\varphi_2}} f = {{\varphi_2}} f\}.
\]
Since part (3) of Lemma \ref{lem TTO} in particular implies that $M_{{{\varphi_2}}}^* {\varphi_1} \clq_{{\varphi_2}} \subseteq \ker T_{{{\varphi_2}}}^{{\varphi_1}}$, we conclude that
\[
M_{\varphi_1} M_{\varphi_1}^* {{\varphi_2}} (M_{{\varphi_2}}^* {\varphi_1} g) = {{\varphi_2}} (M_{{\varphi_2}}^* {\varphi_1} g),
\]
for all $g \in \clq_{{\varphi_2}}$. Then, writing
\[
{{\varphi_2}} (M_{{\varphi_2}}^* {\varphi_1} g) = (M_{{\varphi_2}} M_{{\varphi_2}}^*)(M_{\varphi_1} M_{\varphi_1}^*) {\varphi_1} g,
\]
into the above identity, we find
\[
P_{{\varphi_1} H^2(\D^n)} P_{{{\varphi_2}} H^2(\D^n)} ({\varphi_1} g) = P_{{{\varphi_2}} H^2(\D^n)} P_{{\varphi_1} H^2(\D^n)} ({\varphi_1} g),
\]
for all $g \in \clq_{{\varphi_2}}$, that is
\[
P_{{\varphi_1} H^2(\D^n)} P_{{{\varphi_2}} H^2(\D^n)} M_{\varphi_1}|_{\clq_{{\varphi_2}}} = P_{{{\varphi_2}} H^2(\D^n)} P_{{\varphi_1} H^2(\D^n)} M_{\varphi_1}|_{\clq_{{\varphi_2}}}.
\]
Now we claim that
\[
P_{{\varphi_1} H^2(\D^n)} P_{{{\varphi_2}} H^2(\D^n)} M_{\varphi_1}|_{{{\varphi_2}} H^2(\D^n)} = P_{{{\varphi_2}} H^2(\D^n)} P_{{\varphi_1} H^2(\D^n)} M_{\varphi_1}|_{{{\varphi_2}} H^2(\D^n)}.
\]
To this end, fix $f \in H^2(\D^n)$ and compute
\[
\begin{split}
(P_{{\varphi_1} H^2(\D^n)} P_{{{\varphi_2}} H^2(\D^n)} M_{\varphi_1}) ({{\varphi_2}} f) & = P_{{\varphi_1} H^2(\D^n)} P_{{{\varphi_2}} H^2(\D^n)} {{\varphi_2}} {\varphi_1} f
\\
& = P_{{\varphi_1} H^2(\D^n)} {\varphi_1} {{\varphi_2}} f
\\
& = {\varphi_1} {{\varphi_2}} f
\\
& = (P_{{{\varphi_2}} H^2(\D^n)} P_{{\varphi_1} H^2(\D^n)} M_{\varphi_1}) ({{\varphi_2}} f),
\end{split}
\]
which proves the claim. Therefore
\[
P_{{\varphi_1} H^2(\D^n)} P_{{{\varphi_2}} H^2(\D^n)} M_{\varphi_1} = P_{{{\varphi_2}} H^2(\D^n)} P_{{\varphi_1} H^2(\D^n)} M_{\varphi_1},
\]
and hence
\[
\begin{split}
P_{{\varphi_1} H^2(\D^n)} P_{{{\varphi_2}} H^2(\D^n)} P_{{\varphi_1} H^2(\D^n)} & = P_{{{\varphi_2}} H^2(\D^n)} P_{{\varphi_1} H^2(\D^n)} P_{{\varphi_1} H^2(\D^n)}
\\
& = P_{{{\varphi_2}} H^2(\D^n)} P_{{\varphi_1} H^2(\D^n)},
\end{split}
\]
and consequently, $[P_{{\varphi_1} H^2(\D^n)}, P_{{{\varphi_2}} H^2(\D^n)}] = 0$.
\end{proof}

Now we are ready for the classifications of partially isometric Truncated Toeplitz operators with inner symbols.

\begin{thm}\label{thm PI n>1}
Let $n > 1$. Let ${{\varphi_1}}$ and ${\varphi_2}$ be nonconstant inner functions in $H^{\infty}(\D^n)$. Then $T_{{{\varphi_2}}}^{{\varphi_1}}$ is a partial isometry if and only if there exist inner functions $\tilde{{{\varphi_1}}}, \tilde{{\varphi_2}}, \psi \in H^{\infty}(\D^n)$ such that	$\tilde{{{\varphi_1}}}$ and $\tilde{{\varphi_2}}$ are separated and
\[
{\varphi_1} = \tilde{{\varphi_1}} \psi \text{ and } {{\varphi_2}} = \tilde{{{\varphi_2}}} \psi.
\]
\end{thm}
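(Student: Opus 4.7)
The proof plan splits into a short forward direction and a more substantive converse. For the forward direction, assume $T^{\vp_1}_{\vp_2}$ is a partial isometry. Proposition~\ref{prop: TTO} then gives $[P_{\vp_1 H^2(\D^n)}, P_{\vp_2 H^2(\D^n)}] = 0$, and Theorem~\ref{thm n var proj} immediately extracts inner functions $\tilde{\vp}_1, \tilde{\vp}_2, \psi \in H^\infty(\D^n)$ with $\tilde{\vp}_1, \tilde{\vp}_2$ separated and $\vp_j = \tilde{\vp}_j \psi$ for $j=1,2$. This half is essentially one line.

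For the converse, given the factorization, the strategy is to verify directly that $T^*T$ is a projection on $\clq_{\vp_1}$, which is one of the standard equivalent characterizations of partial isometry recalled at the beginning of Section~\ref{sec: inner proj}. The first key step is to establish the identity
\[
M_{\vp_1}^* M_{\vp_2} = M_{\tilde{\vp}_1}^* M_{\tilde{\vp}_2},
\]
which I would derive from a short inner-product computation on $H^2(\D^n)$: for $f, g \in H^2(\D^n)$, both bilinear forms evaluate to $\int_{\T^n} \tilde{\vp}_2 f \, \overline{\tilde{\vp}_1 g} \, dm$, since $|\psi|^2 = 1$ a.e.\ on $\T^n$. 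Combining this with the separation commutation \eqref{eqn: dc of TO} applied to $\tilde{\vp}_1, \tilde{\vp}_2$ and with $M_{\tilde{\vp}_2}^* M_{\tilde{\vp}_2} = I$, I would then simplify
\[
M_{\vp_2}^* M_{\vp_1} M_{\vp_1}^* M_{\vp_2} = M_{\tilde{\vp}_2}^* M_{\tilde{\vp}_1} M_{\tilde{\vp}_1}^* M_{\tilde{\vp}_2} = P_{\tilde{\vp}_1 H^2(\D^n)}.
\]
Plugging $P_{\clq_{\vp_1}} = I - M_{\vp_1} M_{\vp_1}^*$ into $T^*T = P_{\clq_{\vp_1}} M_{\vp_2}^* P_{\clq_{\vp_1}} M_{\vp_2} P_{\clq_{\vp_1}}$ collapses it to
\[
T^*T = P_{\clq_{\vp_1}} - P_{\clq_{\vp_1}} P_{\tilde{\vp}_1 H^2(\D^n)} P_{\clq_{\vp_1}}.
\]

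To conclude, since $\vp_1 = \tilde{\vp}_1 \psi$ gives $\vp_1 H^2(\D^n) \subseteq \tilde{\vp}_1 H^2(\D^n)$, both $P_{\vp_1 H^2(\D^n)} P_{\tilde{\vp}_1 H^2(\D^n)}$ and $P_{\tilde{\vp}_1 H^2(\D^n)} P_{\vp_1 H^2(\D^n)}$ equal $P_{\vp_1 H^2(\D^n)}$, so $P_{\clq_{\vp_1}}$ and $P_{\tilde{\vp}_1 H^2(\D^n)}$ commute. Then $P_{\clq_{\vp_1}} P_{\tilde{\vp}_1 H^2(\D^n)} P_{\clq_{\vp_1}} = P_{\clq_{\vp_1}} P_{\tilde{\vp}_1 H^2(\D^n)}$ is a projection, and a brief expansion of $(T^*T)^2$ confirms that $T^*T$ is idempotent. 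The main obstacle is isolating the key identity $M_{\vp_1}^* M_{\vp_2} = M_{\tilde{\vp}_1}^* M_{\tilde{\vp}_2}$: a purely algebraic manipulation is blocked because $M_\psi$ need not commute with $M_{\tilde{\vp}_j}^*$ (as $\psi$ is not assumed separated from $\tilde{\vp}_1$ or $\tilde{\vp}_2$), and the passage to the boundary integral is what cleanly sidesteps this obstacle.
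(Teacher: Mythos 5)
Your proposal is correct. The forward direction is exactly the paper's: Proposition \ref{prop: TTO} plus Theorem \ref{thm n var proj}. For the converse, your route differs in its mechanics from the paper's, though both are algebraic verifications built on the separation identity \eqref{eqn: dc of TO}. The paper proves the two operator identities $(I - M_{\vp_1}M_{\vp_1}^*)M_{\vp_2} = M_{\tilde{\vp}_2}M_{\psi}(I - M_{\tilde{\vp}_1}M_{\tilde{\vp}_1}^*)$ and $(I - M_{\tilde{\vp}_1}M_{\tilde{\vp}_1}^*)M_{\vp_1}M_{\vp_1}^* = 0$, deduces the factorization $P_{\clq_{\vp_1}}M_{\vp_2}P_{\clq_{\vp_1}} = M_{\vp_2}P_{\clq_{\tilde{\vp}_1}}$, and then notes that $(M_{\vp_2}P_{\clq_{\tilde{\vp}_1}})^*(M_{\vp_2}P_{\clq_{\tilde{\vp}_1}}) = P_{\clq_{\tilde{\vp}_1}}$ is a projection. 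You instead compute $T^*T$ directly, using as your key lemma the symbol cancellation $M_{\vp_1}^*M_{\vp_2} = M_{\tilde{\vp}_1}^*M_{\tilde{\vp}_2}$ (equivalently $T_{\bar{f}}T_g = T_{\bar{f}g}$ for $f,g \in H^\infty(\D^n)$, which your boundary-integral argument establishes correctly; in the paper this identity appears only as an output of Theorem \ref{thm KD} inside the proof of Theorem \ref{thm n var proj}, not as a tool in this converse). This yields $M_{\vp_2}^*P_{\vp_1 H^2(\D^n)}M_{\vp_2} = P_{\tilde{\vp}_1 H^2(\D^n)}$ and hence $T^*T = P_{\clq_{\vp_1}} - P_{\clq_{\vp_1}}P_{\tilde{\vp}_1 H^2(\D^n)} = P_{\clq_{\vp_1}}P_{\clq_{\tilde{\vp}_1}}$, a product of commuting projections; since $\vp_1 H^2(\D^n) \subseteq \tilde{\vp}_1 H^2(\D^n)$ gives $\clq_{\tilde{\vp}_1} \subseteq \clq_{\vp_1}$, this is in fact the same projection $P_{\clq_{\tilde{\vp}_1}}$ the paper obtains, a useful consistency check. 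What each approach buys: the paper's factorization exhibits the compressed operator concretely as $M_{\vp_2}P_{\clq_{\tilde{\vp}_1}}$, identifying the initial space $\clq_{\tilde{\vp}_1}$ at a glance (information exploited again in the isometry classification, Theorem \ref{thm isom TTO}), whereas your computation is slightly leaner, staying entirely at the level of $T^*T$ and needing only one structural identity plus the elementary inclusion argument.
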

\begin{proof}
Suppose $T_{{{\varphi_2}}}^{{\varphi_1}}$ is a partial isometry. By Proposition \ref{prop: TTO}, we know that
\[
[P_{{\varphi_1} H^2(\D^n)}, P_{{{\varphi_2}} H^2(\D^n)}] = 0.
\]
and the conclusion now follows directly from Theorem \ref{thm n var proj}. To show the reverse assume ${\varphi_1} = \tilde{{\varphi_1}} \psi$ and ${{\varphi_2}} = \tilde{{{\varphi_2}}} \psi$ for some inner function $\psi$ and separated inner functions $\tilde{{{\varphi_1}}}$ and $\tilde{{\varphi_2}}$ in $H^{\infty}(\D^n)$. Recall from \eqref{eqn: dc of TO} that $M_{\tilde{{{\varphi_2}}}}^* M_{\tilde{{\varphi_1}}} = M_{\tilde{{\varphi_1}}} M_{\tilde{{{\varphi_2}}}}^*$. Then, on one hand
\[
\begin{split}
(I - M_{\varphi_1} M_{\varphi_1}^*) M_{{\varphi_2}} & = (I - M_{\tilde{{\varphi_1}}} M_\psi M_{\tilde{{\varphi_1}}}^* M_\psi^*) M_{\tilde{{{\varphi_2}}}} M_\psi
\\
&= M_{\tilde{{{\varphi_2}}}} M_\psi - M_{\tilde{{\varphi_1}}} M_\psi M_{\tilde{{\varphi_1}}}^* M_{\tilde{{{\varphi_2}}}}
\\
& = M_{\tilde{{{\varphi_2}}}} M_\psi(I - M_{\tilde{{\varphi_1}}} M_{\tilde{{\varphi_1}}}^*),
\end{split}
\]
and on the other hand that
\[
\begin{split}
(I - M_{\tilde{{\varphi_1}}} M_{\tilde{{\varphi_1}}}^*) M_{\varphi_1} M_{\varphi_1}^* & = (I - M_{\tilde{{\varphi_1}}} M_{\tilde{{\varphi_1}}}^*) M_\psi M_{\tilde{{\varphi_1}}} M_{\tilde{{\varphi_1}}}^* M_\psi^*
\\
&= M_\psi M_{\tilde{{\varphi_1}}} M_{\tilde{{\varphi_1}}}^* M_\psi^* - M_{\tilde{{\varphi_1}}} M_\psi M_{\tilde{{\varphi_1}}}^* M_\psi^*
\\
& = 0.
\end{split}
\]
Therefore
\[
\begin{split}
P_{\clq_{{\varphi_1}}}M_{{{\varphi_2}}}P_{\clq_{{\varphi_1}}} &=(I-M_{{\varphi_1}}M_{{\varphi_1}}^*)M_{{{\varphi_2}}}(I-M_{{\varphi_1}}M_{{\varphi_1}}^*)
\\
&= M_{\tilde{{{\varphi_2}}}} M_\psi(I - M_{\tilde{{\varphi_1}}} M_{\tilde{{\varphi_1}}}^*)(I-M_{{\varphi_1}}M_{{\varphi_1}}^*)
\\
& = M_{\tilde{{{\varphi_2}}}} M_\psi(I - M_{\tilde{{\varphi_1}}} M_{\tilde{{\varphi_1}}}^*) - M_{\tilde{{{\varphi_2}}}} M_\psi ((I - M_{\tilde{{\varphi_1}}} M_{\tilde{{\varphi_1}}}^*) M_{\varphi_1} M_{\varphi_1}^*)
\\
& = M_{\tilde{{{\varphi_2}}}} M_\psi(I - M_{\tilde{{\varphi_1}}} M_{\tilde{{\varphi_1}}}^*)
\\
& = M_{{{\varphi_2}}}P_{\clq_{\tilde{{\varphi_1}}}},
\end{split}
\]
and finally
\[
\begin{split}
(M_{{{\varphi_2}}}P_{\clq_{\tilde{{\varphi_1}}}})^* (M_{{{\varphi_2}}}P_{\clq_{\tilde{{\varphi_1}}}}) & = P_{\clq_{\tilde{{\varphi_1}}}} M_{{{\varphi_2}}}^* M_{{{\varphi_2}}} P_{\clq_{\tilde{{\varphi_1}}}}
\\
&= P_{\clq_{\tilde{{\varphi_1}}}},
\end{split}
\]
implies that $P_{\clq_{{\varphi_1}}}M_{{{\varphi_2}}}P_{\clq_{{\varphi_1}}}$ is a partial isometry. Since $P_{\clq_{{\varphi_1}}}M_{{{\varphi_2}}}P_{\clq_{{\varphi_1}}}|_{\clq_{\varphi_1}} = T_{{\varphi_2}}^{\varphi_1}$, we conclude that $T_{{\varphi_2}}^{\varphi_1}$ is a partial isometry.
\end{proof}

As a consequence of Theorem \ref{thm 1 var proj}, a similar but much simpler computation yields the following one-variable result:

\begin{cor}\label{cor PI n=1}
Let ${{\varphi_1}}$ and ${\varphi_2}$ be nonconstant inner functions in $H^{\infty}(\D)$. Then $T^{{\varphi_1}}_{{{\varphi_2}}}$ is a partial isometry if and only if ${{\varphi_2}}$ divides ${\varphi_1}$ or ${\varphi_1}$ divides ${{\varphi_2}}$.
\end{cor}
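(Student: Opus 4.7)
The plan is to mirror the proof strategy of Theorem \ref{thm PI n>1} almost verbatim, but with Theorem \ref{thm 1 var proj} playing the role that Theorem \ref{thm n var proj} played there. The key observation is that Proposition \ref{prop: TTO} is stated and proved without any use of $n>1$, so it is directly available in the one-variable setting.

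For the forward direction, suppose $T^{\varphi_1}_{\varphi_2}$ is a partial isometry. I would invoke Proposition \ref{prop: TTO} to conclude that $[P_{\varphi_1 H^2(\D)}, P_{\varphi_2 H^2(\D)}] = 0$, and then feed this into Theorem \ref{thm 1 var proj} to obtain the divisibility dichotomy: either $\varphi_1$ divides $\varphi_2$ or $\varphi_2$ divides $\varphi_1$.

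For the converse I would split into the two divisibility cases. If $\varphi_1 \mid \varphi_2$, write $\varphi_2 = \psi \varphi_1$; then for any $f \in \clq_{\varphi_1}$ one has $\varphi_2 f = \psi \varphi_1 f \in \varphi_1 H^2(\D)$, so $T^{\varphi_1}_{\varphi_2} f = P_{\clq_{\varphi_1}}(\varphi_2 f) = 0$. Thus $T^{\varphi_1}_{\varphi_2} = 0$ is trivially a partial isometry. If instead $\varphi_2 \mid \varphi_1$, write $\varphi_1 = \psi \varphi_2$. In one variable $M_{\varphi_2}$ and $M_\psi$ commute, and combined with $M_{\varphi_2}^* M_{\varphi_2} = I$ a short manipulation yields
\[
(I - M_{\varphi_1} M_{\varphi_1}^*) M_{\varphi_2} \;=\; M_{\varphi_2}(I - M_\psi M_\psi^*) \;=\; M_{\varphi_2} P_{\clq_\psi}.
\]
Since $\psi \mid \varphi_1$ gives $\clq_\psi \subseteq \clq_{\varphi_1}$, applying $P_{\clq_{\varphi_1}}$ on the right is the identity on $\clq_\psi$, and hence
\[
P_{\clq_{\varphi_1}} M_{\varphi_2} P_{\clq_{\varphi_1}} \;=\; M_{\varphi_2} P_{\clq_\psi}.
\]
Finally, $(M_{\varphi_2} P_{\clq_\psi})^*(M_{\varphi_2} P_{\clq_\psi}) = P_{\clq_\psi} M_{\varphi_2}^* M_{\varphi_2} P_{\clq_\psi} = P_{\clq_\psi}$ is a projection, so $P_{\clq_{\varphi_1}} M_{\varphi_2} P_{\clq_{\varphi_1}}$ is a partial isometry on $H^2(\D)$; restricting to $\clq_{\varphi_1}$ gives that $T^{\varphi_1}_{\varphi_2}$ is a partial isometry as required.

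I do not expect any serious obstacle; the argument is a streamlined one-variable shadow of the several-variable proof. The one feature worth flagging is the asymmetry between the two divisibility cases: ``$\varphi_1 \mid \varphi_2$'' collapses the truncated Toeplitz operator to $0$, whereas ``$\varphi_2 \mid \varphi_1$'' yields a genuinely nonzero partial isometry with initial space $\clq_\psi$. This asymmetry is precisely the one-variable incarnation of the factorization $\varphi_i = \tilde{\varphi}_i \psi$ in Theorem \ref{thm PI n>1}, since in one variable ``separated'' forces one of $\tilde{\varphi}_1, \tilde{\varphi}_2$ to be a unimodular constant.
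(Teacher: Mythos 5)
Your proposal is correct and takes essentially the approach the paper intends for this corollary (which it only sketches as ``a similar but much simpler computation''): Proposition \ref{prop: TTO}, which indeed holds for all $n \geq 1$, combined with Theorem \ref{thm 1 var proj} for the forward direction, and a one-variable specialization of the converse computation in Theorem \ref{thm PI n>1}, in which the case $\varphi_1$ divides $\varphi_2$ yields the zero operator (a partial isometry under the paper's definition) and the case $\varphi_2$ divides $\varphi_1$ yields $P_{\clq_{\varphi_1}} M_{\varphi_2} P_{\clq_{\varphi_1}} = M_{\varphi_2} P_{\clq_{\psi}}$. Your concluding remark correctly identifies this dichotomy as the one-variable shadow of the factorization $\varphi_i = \tilde{\varphi}_i \psi$ with $\tilde{\varphi}_1, \tilde{\varphi}_2$ separated.
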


A more thorough look at the statement of Theorem \ref{thm PI n>1} reveals that the roles of the inner functions ${{\varphi_1}}$ and ${\varphi_2}$ can be swapped. This resulted in the following observation:

\begin{cor}\label{cor: duality 1}
Let ${{\varphi_1}}$ and ${\varphi_2}$ be nonconstant inner functions in $H^{\infty}(\D^n)$. Then $T_{{{\varphi_2}}}^{{\varphi_1}}$ is a partial isometry on $\clq_{{\varphi_1}}$ if and only if $T_{{\varphi_1}}^{{{\varphi_2}}}$ is a partial isometry on $\clq_{{{\varphi_2}}}$.
\end{cor}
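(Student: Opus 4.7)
The plan is to observe that the characterizations of partially isometric truncated Toeplitz operators already established (Theorem \ref{thm PI n>1} for $n > 1$ and Corollary \ref{cor PI n=1} for $n = 1$) are manifestly symmetric in the two inner functions. The whole proof therefore reduces to pointing out this symmetry, with no further heavy lifting needed.

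Concretely, I would split into two cases. First, suppose $n > 1$. By Theorem \ref{thm PI n>1}, $T_{\varphi_2}^{\varphi_1}$ is a partial isometry on $\clq_{\varphi_1}$ if and only if there exist inner functions $\tilde{\varphi}_1, \tilde{\varphi}_2, \psi \in H^\infty(\D^n)$ with $\tilde{\varphi}_1$ and $\tilde{\varphi}_2$ separated such that $\varphi_1 = \tilde{\varphi}_1\psi$ and $\varphi_2 = \tilde{\varphi}_2 \psi$. Swapping the roles of the pair $(\tilde{\varphi}_1, \varphi_1)$ with $(\tilde{\varphi}_2, \varphi_2)$, the very same factorization criterion (applied to $\varphi_2$ first and $\varphi_1$ second) characterizes partial isometricity of $T_{\varphi_1}^{\varphi_2}$ on $\clq_{\varphi_2}$. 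Since separability is a symmetric relation between $\tilde{\varphi}_1$ and $\tilde{\varphi}_2$, both conditions coincide.

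For the case $n = 1$, I would appeal to Corollary \ref{cor PI n=1}: $T_{\varphi_2}^{\varphi_1}$ is a partial isometry iff $\varphi_1$ divides $\varphi_2$ or $\varphi_2$ divides $\varphi_1$. This disjunction is visibly symmetric in $\varphi_1$ and $\varphi_2$, and is therefore equivalent to partial isometricity of $T_{\varphi_1}^{\varphi_2}$ on $\clq_{\varphi_2}$ by the same corollary. The two cases together complete the proof.

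There is no real obstacle here: the statement is a transparent corollary whose content is exactly the symmetry of the classification theorems. The only thing one should take a moment to flag is that the partial isometry property is \emph{not} symmetric a priori (the operators $T_{\varphi_2}^{\varphi_1}$ and $T_{\varphi_1}^{\varphi_2}$ act on different Hilbert spaces $\clq_{\varphi_1}$ and $\clq_{\varphi_2}$), so the duality genuinely requires the structural characterization and is not a triviality at the operator level.
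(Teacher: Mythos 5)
Your proof is correct and is essentially the paper's own argument: the corollary is derived there precisely by observing that the factorization criterion of Theorem \ref{thm PI n>1} is symmetric in $\varphi_1$ and $\varphi_2$. Your explicit handling of the $n=1$ case via the symmetric divisibility criterion of Corollary \ref{cor PI n=1} is a small completeness bonus, since the paper's remark only invokes the $n>1$ theorem.
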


Observe that a partial isometry $T$ is an isometry if and only if $\ker T = \{0\}$. In the following, we classify isometric truncated Toeplitz operators with inner symbols.

\begin{thm}\label{thm isom TTO}
Suppose $n > 1$. Let ${{\varphi_1}}$ and ${\varphi_2}$ be nonconstant inner functions in $H^{\infty}(\D^n)$. Then $T^{{\varphi_1}}_{{{\varphi_2}}}$ is an isometry if and only if ${{\varphi_1}}$ and ${\varphi_2}$ are separated.
\end{thm}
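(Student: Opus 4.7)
The plan is to view an isometry as a partial isometry with trivial kernel, so I can leverage Theorem \ref{thm PI n>1} for the partial isometry part and then isolate the kernel via Lemma \ref{lem TTO}. The key computational identity I will rely on is the following: whenever $\varphi_1=\tilde{\varphi}_1\psi$ and $\varphi_2=\tilde{\varphi}_2\psi$ with $\tilde{\varphi}_1,\tilde{\varphi}_2$ separated, the separation relation \eqref{eqn: dc of TO} combined with $M_\psi^*M_\psi=I$ gives, for every $h\in H^2(\D^n)$,
\[
M_{\varphi_2}^*(\varphi_1 h) \;=\; M_{\tilde{\varphi}_2}^*M_\psi^*M_\psi M_{\tilde{\varphi}_1}h \;=\; M_{\tilde{\varphi}_2}^*(\tilde{\varphi}_1 h) \;=\; \tilde{\varphi}_1\, M_{\tilde{\varphi}_2}^* h.
\]
Feeding this into Lemma \ref{lem TTO}(3) yields
\[
\ker T_{\varphi_2}^{\varphi_1} \;=\; \overline{\tilde{\varphi}_1 M_{\tilde{\varphi}_2}^*\clq_{\varphi_2}}.
\]

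For the \emph{if} direction, suppose $\varphi_1$ and $\varphi_2$ are separated. Applying Theorem \ref{thm PI n>1} with $\tilde{\varphi}_i=\varphi_i$ and $\psi\equiv 1$ shows that $T_{\varphi_2}^{\varphi_1}$ is a partial isometry. Using the displayed kernel formula (with $\psi\equiv 1$), for any $h\in\clq_{\varphi_2}=\ker M_{\varphi_2}^*$ we have $M_{\varphi_2}^* h=0$, hence $\ker T_{\varphi_2}^{\varphi_1}=\{0\}$, so $T_{\varphi_2}^{\varphi_1}$ is an isometry.

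For the \emph{only if} direction, assume $T_{\varphi_2}^{\varphi_1}$ is an isometry. It is in particular a partial isometry, so by Theorem \ref{thm PI n>1} there exist separated inner $\tilde{\varphi}_1,\tilde{\varphi}_2$ and an inner $\psi\in H^\infty(\D^n)$ with $\varphi_1=\tilde{\varphi}_1\psi$ and $\varphi_2=\tilde{\varphi}_2\psi$. From the kernel formula above together with $\ker T_{\varphi_2}^{\varphi_1}=\{0\}$, and since $M_{\tilde{\varphi}_1}$ is an isometry (hence injective), we must have $M_{\tilde{\varphi}_2}^* h=0$ for every $h\in\clq_{\varphi_2}$, i.e.\ $\clq_{\varphi_2}\subseteq\clq_{\tilde{\varphi}_2}$. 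Taking orthogonal complements inside $H^2(\D^n)$ gives $\tilde{\varphi}_2 H^2(\D^n)\subseteq\varphi_2 H^2(\D^n)=\tilde{\varphi}_2\psi H^2(\D^n)$, and cancelling $M_{\tilde{\varphi}_2}$ (which is an isometry) yields $H^2(\D^n)\subseteq\psi H^2(\D^n)$. Thus $M_\psi$ is surjective, so $\psi$ is a unimodular constant $\alpha\in\T$. Therefore $\varphi_1=\alpha\tilde{\varphi}_1$ and $\varphi_2=\alpha\tilde{\varphi}_2$ are separated, as required.

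The main obstacle I anticipate is the clean identification of $\ker T_{\varphi_2}^{\varphi_1}$ once the Theorem \ref{thm PI n>1} factorization is in hand; it is precisely the separation relation \eqref{eqn: dc of TO} applied to $\tilde{\varphi}_1,\tilde{\varphi}_2$ (not to $\varphi_1,\varphi_2$ themselves) that lets $M_{\tilde{\varphi}_2}^*$ and $M_{\tilde{\varphi}_1}$ pass through each other, after absorbing the common factor $\psi$. Once that identity is in place, both directions reduce to a short argument about when $\clq_{\varphi_2}\subseteq\clq_{\tilde{\varphi}_2}$, which forces $\psi$ to be constant.
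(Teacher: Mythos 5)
Your proof is correct, and it follows the same overall strategy as the paper---treat the isometry as a partial isometry with trivial kernel, invoke Theorem \ref{thm PI n>1} for the factorization $\varphi_1=\tilde{\varphi}_1\psi$, $\varphi_2=\tilde{\varphi}_2\psi$, and then force $\psi$ to be a unimodular constant---but your kernel analysis is routed differently, and is arguably shorter. The paper works inside $\clq_{\varphi_1}$: from $\ker T_{\varphi_2}^{\varphi_1}=\{0\}$ it deduces $\clq_{\varphi_1}=\overline{M_{\varphi_2}^*\clq_{\varphi_1}}$, then uses the decomposition $\clq_{\varphi_1}=\clq_\psi\oplus\psi\clq_{\tilde{\varphi}_1}$, the commutation \eqref{eqn: dc of TO} and the surjectivity of $M_{\tilde{\varphi}_2}^*$ to conclude $\clq_{\varphi_1}=\clq_{\tilde{\varphi}_1}$, and finally Beurling-type uniqueness to eliminate $\psi$. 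You instead use Lemma \ref{lem TTO}(3) verbatim, simplify $M_{\varphi_2}^*\varphi_1\clq_{\varphi_2}=\tilde{\varphi}_1 M_{\tilde{\varphi}_2}^*\clq_{\varphi_2}$ via \eqref{eqn: dc of TO} after absorbing the common factor $\psi$, and read off $\clq_{\varphi_2}\subseteq\clq_{\tilde{\varphi}_2}$, hence $H^2(\D^n)\subseteq\psi H^2(\D^n)$; this works on the $\varphi_2$ side and avoids the decomposition of $\clq_{\varphi_1}$ altogether. For the converse the paper gives a direct tensor-product computation (after separating variables, $T_{\varphi_2}^{\varphi_1}=M_{\varphi_2}\otimes I$), while you recycle Theorem \ref{thm PI n>1} with $\psi\equiv 1$ together with your kernel formula; both are valid, and your version keeps the argument uniform. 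The only step you assert without justification is that $\psi H^2(\D^n)=H^2(\D^n)$ forces $\psi$ to be a unimodular constant; within this paper you can cite either the uniqueness statement for Beurling-type subspaces (as the paper does) or Corollary \ref{cor inf dim QM}, since $\clq_\psi=\{0\}$ is impossible for a nonconstant inner $\psi$ when $n>1$.
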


\begin{proof}
Suppose that $T^{{\varphi_1}}_{{{\varphi_2}}}$ is an isometry. By applying Theorem \ref{thm PI n>1} to $T^{{\varphi_1}}_{{{\varphi_2}}}$, we get inner functions $\tilde{{{\varphi_1}}}, \tilde{{\varphi_2}}$ and $\psi$ in $H^{\infty}(\D^n)$ such that $\tilde{{{\varphi_1}}}$ and $\tilde{{\varphi_2}}$ are separated and ${\varphi_1} = \tilde{{\varphi_1}} \psi$ and ${{\varphi_2}} = \tilde{{{\varphi_2}}} \psi$. By Lemma \ref{lem TTO}, we also know that
\[
\ker T^{{\varphi_1}}_{{{\varphi_2}}} = \clq_{{\varphi_1}} \ominus M_{{{\varphi_2}}}^* \clq_{{\varphi_1}}.
\]
Since $\ker T^{{\varphi_1}}_{{{\varphi_2}}} = \{ 0\}$, it follows that
\[
\clq_{{\varphi_1}}= \overline{M_{{{\varphi_2}}}^*\clq_{{\varphi_1}}}.
\]
Using the factorization ${\varphi_1} = \tilde{{\varphi_1}} \psi$, we find
\[
I - M_{\varphi_1} M_{\varphi_1}^* = (I - M_\psi M_\psi^*) \oplus M_\psi (I - M_{\tilde{{\varphi_1}}} M_{\tilde{{\varphi_1}}}^*)M_\psi,
\]
and hence (see the proof of Theorem \ref{thm Douglas})
\[
\clq_{\varphi_1} = \clq_\psi \oplus \psi \clq_{\tilde{{\varphi_1}}}.
\]
The equality $M_\psi^* \clq_\psi = \{0\}$ shows furthermore that
\[
\overline{M_\psi^* \clq_{\varphi_1}} = \clq_{\tilde{{\varphi_1}}},
\]
and hence
\[
\begin{split}
\clq_{{\varphi_1}} & = \overline{M_{{{\varphi_2}}}^*\clq_{{\varphi_1}}}
\\
& = \overline{M_{\tilde{{{\varphi_2}}}}^*M_\psi^* \clq_{\varphi_1}}
\\
& = \overline{M_{\tilde{{{\varphi_2}}}}^* \clq_{\tilde{{\varphi_1}}}}.
\end{split}
\]
We now apply the identity \eqref{eqn: dc of TO} to the pair $(M_{\tilde{{{\varphi_2}}}}, M_{\tilde{{\varphi_1}}})$ and observe that
\[
M_{\tilde{{{\varphi_2}}}}^* (I - M_{\tilde{{\varphi_1}}} M_{\tilde{{\varphi_1}}}^*) = (I - M_{\tilde{{\varphi_1}}} M_{\tilde{{\varphi_1}}}^*) M_{\tilde{{{\varphi_2}}}}^*.
\]
Since $M_{\tilde{{{\varphi_2}}}}^*$ is onto (as it is a co-isometry), this yields
\[
\overline{M_{\tilde{{{\varphi_2}}}}^* \clq_{\tilde{{\varphi_1}}}} = \clq_{\tilde{{\varphi_1}}},
\]
and we conclude that
\[
\clq_{{\varphi_1}} = \clq_{\tilde{{\varphi_1}}},
\]
or equivalently, ${\varphi_1} H^2(\D^n)=\tilde{{\varphi_1}} H^2(\D^n)$. By the uniqueness part of the Beurling theorem, there is a unimodular constant $\alpha$ such that ${\varphi_1} = \alpha \tilde{{\varphi_1}}$, which in turn yields
\[
\psi = \alpha,
\]
and hence ${\varphi_1} = \alpha \tilde{{\varphi_1}}$ and ${{\varphi_2}} = \alpha \tilde{{{\varphi_2}}}$. Therefore, ${\varphi_1}$ and $\varphi_{2}$ are separated. To prove the converse, we assume that ${\varphi_1}$ and $\varphi_{2}$ are separated. For simplicity, assume that ${{\varphi_2}}$ depends only on the first $m$ variables and ${\varphi_1}$ depends only on the remaining $n-m$ variables. It is now easy to see that
\[
T^{{\varphi_1}}_{{{\varphi_2}}} = M_{{{\varphi_2}}}\otimes I \text{ on } H^2(\D^m) \otimes (H^2(\D^{n-m})/ {\varphi_1} H^2(\D^{n-m})).
\]
This leads naturally to the conclusion that $T^{{\varphi_1}}_{{{\varphi_2}}}$ is an isometry.
\end{proof}

A careful inspection of the aforementioned proof shows there are no nontrivial isometric truncated Toeplitz operators in the $n=1$ case. Moreover, as in Corollary \ref{cor: duality 1}, we have the following consequence of the above theorem:

\begin{cor}\label{cor: duality 2}
Suppose $n>1$. Let ${{\varphi_1}}$ and ${\varphi_2}$ be nonconstant inner functions in $H^{\infty}(\D^n)$. Then $T_{{{\varphi_2}}}^{{\varphi_1}}$ is an isometry on $\clq_{{\varphi_1}}$ if and only if $T_{{\varphi_1}}^{{{\varphi_2}}}$ is an isometry on $\clq_{{{\varphi_2}}}$.
\end{cor}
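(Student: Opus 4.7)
The plan is to deduce this duality directly from Theorem \ref{thm isom TTO}. That theorem characterizes isometric truncated Toeplitz operators with inner symbols by a single, manifestly symmetric condition on the pair of inner functions, namely that they be separated. Since ``separated'' is a relation that does not distinguish between the two functions (it only asks that no variable appear in the arguments of both), the characterization is invariant under interchange of the roles of $\varphi_1$ and $\varphi_2$.

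Concretely, I would argue as follows. First, apply Theorem \ref{thm isom TTO} to the pair $(\varphi_1, \varphi_2)$ in the order stated: $T_{\varphi_2}^{\varphi_1}$ is an isometry on $\clq_{\varphi_1}$ if and only if $\varphi_1$ and $\varphi_2$ are separated. Next, apply the same theorem with the roles of the two inner functions interchanged, which is legitimate because both are nonconstant by hypothesis: $T_{\varphi_1}^{\varphi_2}$ is an isometry on $\clq_{\varphi_2}$ if and only if $\varphi_2$ and $\varphi_1$ are separated. These two right-hand conditions coincide by the symmetry of the separation relation, and the claimed equivalence follows.

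There is essentially no obstacle. The entire content of the statement has already been absorbed into Theorem \ref{thm isom TTO}; the corollary merely records the symmetry that is otherwise masked by the asymmetric notation ``$T_{\varphi_2}^{\varphi_1}$'' (which singles out $\varphi_1$ as determining the underlying model space and $\varphi_2$ as the symbol). The only thing one must verify is that the hypotheses of Theorem \ref{thm isom TTO} ($n>1$ and both inner functions nonconstant) are inherited verbatim from the hypothesis of the corollary, so that its application is justified in both directions.
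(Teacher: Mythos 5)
Your proposal is correct and is exactly the paper's argument: Corollary \ref{cor: duality 2} is stated there as an immediate consequence of Theorem \ref{thm isom TTO}, using precisely the symmetry of the ``separated'' condition under interchange of $\varphi_1$ and $\varphi_2$. Nothing further is needed.
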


By comparing Theorems \ref{thm PI n>1} and \ref{thm isom TTO}, we conclude that the existence of the nontrivial inner function $\psi$ in $H^\infty(\D^n)$, $n > 1$, is the primary distinction between isometric and partial isometric truncated Toeplitz operators. Finally, it would be interesting to explore whether there are any links between the concept of partial orders of partial isometries in \cite{Garcia} and the theory of partially isometric Toeplitz operators or partially isometric truncated Toeplitz operators on $\D^n$, $n \geq 1$.

\section{Examples}\label{sec example}

Let us wrap up this paper with two simple examples. Given $\alpha \in \D$, denote by $b_\alpha$ the Blaschke factor corresponding to $\alpha$:
\[
b_{\alpha}(z) = \frac{z - \alpha}{1 - \bar{\alpha} z} \qquad (z \in \D).
\]
Finite and infinite Blaschke products (as long as the associated sequences from $\D$ satisfy the Blaschke condition) are well-known examples of inner functions in $H^\infty(\D)$. Also, for each $\alpha \in \D$, define the Szeg\"{o} kernel function $\bS(\cdot, \alpha)$ on $\D$ by
\[
(\bS(\cdot, \alpha))(z) = \frac{1}{1 - \bar{\alpha} z} \qquad (z \in \D).
\]
Finally, recall that for each $\vp \in H^\infty(\D)$, we have
\[
M_{\vp}^* \bS(\cdot, \alpha) = \overline{\vp(\alpha)} \bS(\cdot, \alpha).
\]
The following example validates Theorem \ref{thm 1 var proj}.

\begin{ex}
Let $\alpha, \beta \in \D\setminus \{0\}$. Let
\[
\vp_{1} = b_{\alpha} \text{ and } \vp_2 = b_{\beta},
\]
and suppose $\alpha \neq \beta$. Since $\vp_1(\alpha) = b_\alpha(\alpha) = 0$, it follows that
\[
\begin{split}
P_{\vp_1H^2(\D)} \bS(\cdot, \alpha) & = M_{\vp_1} M_{\vp_1}^* \bS(\cdot, \alpha)
\\
& = M_{\vp_1} (\overline{\vp_1(\alpha)} \bS(\cdot, \alpha))
\\
&= 0,
\end{split}
\]
and hence
\begin{align*}
[P_{\vp_{1}H^2(\D)},P_{\vp_{2}H^2(\D)}]\bS(\cdot, \alpha) & = P_{\vp_{1}H^2(\D)}P_{\vp_{2}H^2(\D)} \bS(\cdot, \alpha)
\\
& = P_{\vp_{1}H^2(\D)} M_{\vp_2} M_{\vp_2}^* \bS(\cdot, \alpha)
\\
& = \overline{\vp_2(\alpha)} P_{\vp_1 H^2(\D)}(\vp_2 \bS(\cdot, \alpha))
\\
& = \overline{b_\beta(\alpha)} P_{\vp_1 H^2(\D)}(\vp_2 \bS(\cdot, \alpha)).
\end{align*}
We claim that $P_{\vp_1 H^2(\D)}(\vp_2 \bS(\cdot, \alpha)) \neq 0$. Indeed, if $P_{\vp_1 H^2(\D)}(\vp_2 \bS(\cdot, \alpha)) = 0$, then
\[
0 = P_{\vp_1 H^2(\D)}(\vp_2 \bS(\cdot, \alpha)) = M_{\vp_1} M_{\vp_1}^* (\vp_2 \bS(\cdot, \alpha)).
\]
Since $M_{\vp_1}$ is an isometry, it follows that $M_{\vp_1}^* (\vp_2 \bS(\cdot, \alpha)) = 0$, so that
\[
b_{\beta} \bS(\cdot, \alpha) = \vp_2 \bS(\cdot, \alpha) \in \ker M_{b_\alpha}^*.
\]
On the other hand, a simple (and well-known computation) shows that
\[
\ker M_{b_\alpha}^* = \mathbb{C} \bS(\cdot, \alpha).
\]
We thus have
\[
b_{\beta} \bS(\cdot, \alpha) = c \bS(\cdot, \alpha),
\]
for some nonzero scalar $c$, and thus $b_\beta = c$, which is a contradiction. Finally, $\alpha \neq \beta$ implies that $b_\beta(\alpha) \neq 0$, and hence
\[
[P_{\vp_{1}H^2(\D)},P_{\vp_{2}H^2(\D)}] \neq 0.
\]
In the present scenario, it can be observed that neither function $\vp_1$ is a divisor of function $\vp_2$, nor is function $\vp_2$ a divisor of function $\vp_1$. Hence, the aforementioned conclusion is further substantiated by Theorem \ref{thm 1 var proj}.
\end{ex}

The following example illustrates Theorem \ref{thm n var proj}.

\begin{ex}
Fix $\alpha, \beta \in \D$. Define inner functions $\vp_1, \vp_2$ in $H^\infty(\D^2)$ by
\[
\vp_{1}(z_1, z_2) = z_{1} b_{\alpha}(z_1),
\]
and
\[
\vp_2(z_1, z_2) = z_{1}b_{\beta}(z_2),
\]
for all $(z_1,z_2) \in \D^2$. We have
\[
P_{\vp_{1}H^2(\D^2)} =  M_{z_1} M_{b_\alpha} M_{b_\alpha}^* M_{z_1}^*,
\]
and
\[
P_{\vp_{2}H^2(\D^2)} =  M_{z_1} M_{b_\beta} M_{b_\beta}^* M_{z_1}^*.
\]
Therefore
\[
[P_{\vp_{1}H^2(\D^2)},P_{\vp_{2}H^2(\D^2)}] = M_{z_1}\big(M_{b_{\alpha}}M_{b_{\alpha}}^* M_{b_{\beta}}M_{b_{\beta}}^* - M_{b_{\beta}}M_{b_{\beta}}^* M_{b_{\alpha}}M_{b_{\alpha}}^*\big)M_{z_1}^*.
\]
Since $b_\alpha$ and $b_\beta$ are separated functions, by \eqref{eqn: dc of TO}, it follows that $M_{b_{\alpha}}^* M_{b_{\beta}} = M_{b_{\beta}} M_{b_{\alpha}}^*$. Then
\[
M_{b_{\alpha}}M_{b_{\alpha}}^* M_{b_{\beta}}M_{b_{\beta}}^* - M_{b_{\beta}}M_{b_{\beta}}^* M_{b_{\alpha}}M_{b_{\alpha}}^* = 0,
\]
hence, by summing
\[
[P_{\vp_{1}H^2(\D^2)},P_{\vp_{2}H^2(\D^2)}] = 0.
\]
In this case, neither $\vp_1$ nor $\vp_2$ divides the other. In contrast to the $n=1$ case, however, the projections commute here because, according to Theorem \ref{thm n var proj},
\[
\psi(z_1, z_2) = z_1 \qquad ((z_1, z_2) \in \D^2),
\]
is a common factor and $b_{\alpha}$ and $b_{\beta}$ are separated functions.
\end{ex}

\smallskip

\noindent\textsf{Acknowledgement:} We are truly appreciative of the referee's attentive reading and insightful comments. The first named author acknowledges IIT Bombay for its warm hospitality. The research of the first named author is supported by the institute post-doctoral fellowship of IIT Bombay. The research of the second named author is partially supported by SG/IITH/F316/2022-23/SG-151, and DST/INSPIRE/04/2021/002101.
The research of the third named author is supported in part by TARE (TAR/2022/000063), SERB (DST), Government of India.

\end{document}